\documentclass[a4paper]{amsart}

\usepackage{amssymb}
\usepackage{stmaryrd}

\frenchspacing
\swapnumbers

\makeatletter
\def\section{\@startsection{section}{1}%
  \z@{.7\linespacing\@plus\linespacing}{.5\linespacing}%
  {\normalfont\bfseries\centering}}
\def\@secnumfont{\bfseries}
\makeatother

\renewcommand{\o}{\circ}
\def\frak{\mathfrak}
\def\Bbb{\mathbb}
\def\Cal{\mathcal}
\def\sideremark#1{\ifvmode\leavevmode\fi\vadjust{\vbox to0pt{\vss%
  \hbox to 0pt{\hskip\hsize\hskip1em%
  \vbox{\hsize3cm\tiny\raggedright\pretolerance10000%
  \noindent #1\hfill}\hss}\vbox to8pt{\vfil}\vss}}}%

\newcommand{\al}{\alpha}

\newcommand{\ka}{\kappa}

\newcommand{\om}{\omega}
\newcommand{\ph}{\varphi}

\newcommand{\si}{\sigma}
\renewcommand{\th}{\theta}
\newcommand{\ze}{\zeta}

\newcommand{\Ga}{\Gamma}
\newcommand{\La}{\Lambda}
\newcommand{\Ph}{\Phi}

\newcommand{\Om}{\Omega}

\newcommand{\Y}{\Upsilon}

\newcommand{\Ad}{\operatorname{Ad}}

\renewcommand{\exp}{\operatorname{exp}}

\renewcommand{\o}{\circ}

\newcommand{\pmat}[1]{\begin{pmatrix}#1\end{pmatrix}}
\newcommand{\smat}[1]{\left(\begin{smallmatrix}#1\end{smallmatrix}\right)}

\let\[=\llbracket
\let\]=\rrbracket
\let\x=\times

\def\g{\frak g}
\newcommand{\fg}{{\frak g}}

\def\p{\frak p}
\def\q{\frak q}

\def\X{\frak X}
\def\({\big(}
\def\){\big)}
\def\R{\Bbb R}

\def\G{{\Cal G}}
\newcommand{\tcg}{{\tilde{\Cal G}}}
\newcommand{\tg}{{\tilde{\frak g}}}
\newcommand{\tp}{{\tilde{\frak p}}}
\newcommand{\tq}{{\tilde{\frak q}}}
\newcommand{\hg}{{\hat{\frak g}}}
\newcommand{\hp}{{\hat{\frak p}}}

\def\L{{\Cal L}}
\def\.{\hbox to5pt{\hss$\cdot$\hss}}

\newtheorem*{prop*}{Proposition \thesubsection}

\newtheorem*{thm*}{Theorem \thesubsection}

\newtheorem*{lem*}{Lemma \thesubsection}

\newtheorem*{cor*}{Corollary \thesubsection}
\theoremstyle{remark}
\newtheorem*{remark*}{Remark \thesubsection}


\begin{document}

\title{Contact projective structures and chains\\} \author{Andreas \v
  Cap\\ Vojt\v ech \v Z\'adn\'\i k} \thanks{First author supported by
  project P19500--N13 of the Fonds zur F\"orderung der
  wissenschaftlichen Forschung (FWF). Second author supported by the grant
  201/06/P379 of the Czech Science Foundation (GA\v CR).}  

\address{A.\v C: Fakult\"at f\"ur Mathematik, Universit\"at Wien,
  Nordbergstra\ss e 15, A--1090 Wien, Austria and International Erwin
  Schr\"odinger Institute for Mathematical Physics, Boltzmanngasse 9,
  A--1090 Wien, Austria\newline V.\v Z: Faculty of Education, Masaryk University,
  Po\v r\'\i\v c\'\i{} 31, 60300 Brno, Czech Republic}
\email{Andreas.Cap@univie.ac.at, zadnik@math.muni.cz}

\begin{abstract}
  Contact projective structures have been profoundly studied by
  D.J.F.~Fox. He associated to a contact projective structure a
  canonical projective structure on the same manifold. We interpret
  Fox' construction in terms of the equivalent parabolic (Cartan)
  geometries, showing that it is an analog of Fefferman's construction
  of a conformal structure associated to a CR structure. We show that,
  on the level of Cartan connections, this Fefferman--type
  construction is compatible with normality if and only if the initial
  structure has vanishing contact torsion.  This leads to a geometric
  description of the paths that have to be added to the contact
  geodesics of a contact projective structure in order to obtain the
  subordinate projective structure. They are exactly the chains
  associated to the contact projective structure, which are analogs of
  the Chern--Moser chains in CR geometry.  Finally, we analyze the
  consequences for the geometry of chains and prove that a
  chain--preserving contactomorphism must be a morphism of contact
  projective structures.
\end{abstract}

\subjclass{53A20, 53B10, 53B15, 53C15, 53D10}
\date{October 15, 2008}
\maketitle


\section{Introduction}\label{1}
Classical projective structures can be viewed as describing the
geometry of geodesics of affine connections, viewed as unparametrized
curves (paths). The study of these structures was a very active part
of differential geometry in the first decades of the 20th century.
After some time of less activity, the interest in these geometries has
been revived during the last years. Much of this recent interest is
related to the fact that they form a simple instance of the large
class of so--called parabolic geometries.

Among the parabolic geometries there is also a contact analog of
classical projective structures, called contact projective structures.
Such a structure is given by a contact structure and a family of paths
in directions tangent to the contact distribution, which can be
realized as geodesics of some affine connection. While basic ideas on
these structures can be traced back to the classical era, they have
been formally introduced and thoroughly studied by D.J.F.~Fox in
\cite{Fox}. One of the main results in that article is that any
contact projective structure can be canonically extended to a
projective structure on the same manifold.

Studying Fox' canonical projective structure is the main purpose of
this article. We first review some fundamental facts on projective and
contact projective structures in Section \ref{2}. In Section \ref{3},
we give a geometric description of the paths in directions transverse
to the contact distribution that have to be added to the given paths
in contact directions in order to obtain the canonical projective
structure. To describe these curves, recall that for CR manifolds of
hypersurface type, there are the so--called Chern--Moser chains
introduced in \cite{CM}. They form a family of canonical
unparametrized curves available in all directions transverse to the
contact distribution. The description of Chern--Moser chains via the
canonical Cartan connection associated to a CR structure easily
generalizes to all parabolic contact structures, see \cite{C-S-Z}. In
this way, one obtains a family of chains associated to any contact
projective structure, and these are the curves to be added in order to
get the canonical projective structure, see Corollary \ref{C3.3}.

This description is obtained via another result, which is of
independent interest. In \cite{Fox}, the canonical projective
structure was obtained via the so--called ambient descriptions or cone
descriptions of contact projective and projective structures. We give
a description in terms of the canonical Cartan connections, which
shows that it is an analog of the Fefferman construction as described
in \cite{C-two}, see Proposition \ref{P3.3}. We also show that this
Fefferman type construction produces not only the canonical projective
structure but also its canonical Cartan connection if and only if the
initial contact projective structure has vanishing contact torsion, see
Theorem \ref{T3.2}.

The fact that the chains of a contact projective structure can be
realized as geodesics of an affine connection is in sharp contrast to
the cases of CR structures and Lagrangean contact structures, see
\cite{CZ}. In the latter article, we have studied chains via the path
geometry they determine. In Section \ref{4} we discuss these issues in
the contact projective case, where they are rather easy. In spite of
the fact that the contact projective structure can \textit{not} be
recovered from the path geometry of chains, we are able to prove that
contactomorphisms which preserve chains actually are morphisms of
contact projective structures, see Theorem \ref{T4.3}.

\section{Projective and contact projective structures}\label{2}
\subsection{Projective structures}\label{2.1}
A \textit{projective structure} on a smooth manifold $M$ is given by a
class of projectively equivalent linear connections $[\nabla]$ on
$TM$.  Two connections are called \textit{projectively equivalent} if
their difference tensor is of the form
$A(\xi,\eta)=\Y(\xi)\eta+\Y(\eta)\xi$ for some one--form
$\Y\in\Om^1(M)$.  Note that such a tensor is symmetric, so by
definition, projectively equivalent connections have the same torsion.
It is a well known classical result that two connections which have the
same torsion are projectively equivalent if and only if they have the
same geodesics up to parametrization.  

This means that a projective structure on $M$ is given by a class of
linear connections on $TM$ which have the same torsion and the same
unparametrized geodesics.  Since symmetrizing a connection does not
change its geodesics, it is usually assumed the connections in the
class are torsion-free, which is a natural normalization of the
structure.

We can also interpret this description as saying that a projective
structure on $M$ is given by the smooth family of paths
(unparametrized curves) formed by the geodesics. This is an example of
a so--called path geometry, i.e.~a smooth family of paths with exactly
one path through each point in each directions, see \ref{4.1} for the
precise definition. We will return to this point of view there.

The model projective structure is given by the real projective space
$\R P^{m}=\Cal P\R^{m+1}$ with the class of connections induced from
the canonical flat connection on $\R^m$. The geodesics of these
connections are the projective lines. The group of diffeomorphisms of
$\R P^{m}$ which preserve this structure is $PGL(m+1,\R)$, the
quotient of $GL(m+1,\R)$ by its center. For our purposes it is better
to work with oriented projective structures. This means replacing
$\Bbb RP^m$ by the sphere $S^m$, viewed as the space of rays in $\Bbb
R^{m+1}$. Then the appropriate group is $\tilde G:=SL(m+1,\R)$, and
the distinguished paths are the great circles on $S^m$. If $m$ is
even, then $\tilde G$ is isomorphic to $PGL(m+1,\Bbb R)$, while for
odd $m$ it is a two--fold covering.  In any case, $\tilde G$ acts
transitively both on $\R^{m+1}\setminus\{0\}$ and on $S^m$.  Let
$\tilde P\subset\tilde G$ be the stabilizer of the ray generated by
the first vector of the standard basis of $\R^{m+1}$ and let $\tilde
Q\subset\tilde P$ be the stabilizer of the vector itself, so
$S^m\cong\tilde G/\tilde P$ and $\R^{m+1}\setminus\{0\}\cong \tilde
G/\tilde Q$. In terms of matrices, $\tilde P$ is represented by block
matrices
$$
\tilde P=\left\{\pmat{\det(A)^{-1}&Z\\0&A}: A\in
  GL^+(m,\R),Z\in\R^{m*}\right\},
$$
where $GL^+(m,\R)=\{A\in GL(m,\R):\det(A)>0\}$. The subgroup $\tilde
Q\subset\tilde P$ is given by those matrices in $\tilde P$ for which
$A\in SL(m,\R)$.  $\tilde P$ is a parabolic subgroup of the simple Lie
group $\tilde G$ and the corresponding grading of the Lie algebra
$\tg=\frak{sl}(m+1,\R)$ is given by the block decomposition
$$
\pmat{\tg_0&\tg_1\\ \tg_{-1}&\tg_0}
$$
with blocks of sizes $1$ and $m$. Hence $\tg_{-1}\cong\R^m$,
$\tg_0\cong\mathfrak{gl}(m,\R)$, and $\tg_1\cong\R^{m*}$. The Lie
algebras of $\tilde P$ and $\tilde Q$ are $\tp=\tg_0\oplus\tg_1$ and
$\tilde\q=\tg_0^{ss}\oplus\tg_1$, respectively. Here $\tg_0^{ss}$
denotes the semisimple part of $\fg_0$, which is isomorphic to
$\mathfrak{sl}(m,\Bbb R)$.

General oriented projective structures admit an equivalent description
as Cartan geometries of type $(\tilde G,\tilde P)$. Projecting to the
lower right block defines a homomorphism from $\tilde P$ onto
$GL^+(m,\Bbb R)$, so we can view the latter group as a quotient of
$\tilde P$. Then with notation as above, the following holds, see
e.g.~\cite{luminy}:

\begin{thm*}\label{T2.1}
  Let $M$ be an oriented smooth manifold of dimension $\geq 2$ which
  is endowed with a projective structure. Then the oriented linear
  frame bundle of $M$ can be canonically extended to a principal
  $\tilde P$--bundle $\tilde\G\to M$, which can be endowed with a
  Cartan connection $\tilde\om\in\Om^1(\tilde\G,\tg)$. The pair
  $(\tilde\G,\tilde\om)$ is uniquely determined up to isomorphism if
  one in addition requires the curvature of $\tilde\om$ to satisfy a
  normalization condition, which will be discussed in \ref{3.2} below.
\end{thm*}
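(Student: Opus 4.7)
The plan is to realize the pair $(\tilde\G, \tilde\om)$ as an instance of the general prolongation procedure for $|1|$-graded parabolic geometries, specialized to the situation where the underlying structure is a class of torsion-free affine connections.

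First I would build the bundle. Write $\G_0 \to M$ for the oriented linear frame bundle, which is principal for $GL^+(m,\R)$. The projection $\tilde P \to GL^+(m,\R)$ has kernel $\exp(\tg_1)$, abelian and identified with $\R^{m*}\cong T^*M$ as an associated bundle. Accordingly I would set $\tilde\G := \G_0 \times_M T^*M$ with the natural $\tilde P$-action that covers the $GL^+(m,\R)$-action on $\G_0$ and whose restriction to $\exp(\tg_1)$ translates in the cotangent fibers. This gives a principal $\tilde P$-bundle canonically extending $\G_0$.

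Next I would construct the Cartan connection. The soldering form of $\G_0$ pulls back to a horizontal $\tg_{-1}$-valued one-form $\th$ on $\tilde\G$. A chosen torsion-free representative $\nabla\in[\nabla]$ induces a principal connection on $\G_0$, and hence a $\tg_0$-valued form $\ga$. The $T^*M$-factor tautologically produces a $\tg_1$-valued form $\ph$. The sum $\tilde\om := \th + \ga + \ph$ is $\tilde P$-equivariant by construction and reproduces fundamental vector fields, so it is a Cartan connection. Independence of the choice of $\nabla$ comes from the fact that replacing $\nabla$ by a projectively equivalent connection via $\Y\in\Om^1(M)$ alters $\ga$ in a way compensated by the corresponding translation in the $T^*M$-factor, i.e.~by a principal right action of $\exp(\tg_1)\subset \tilde P$.

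For the normalization and uniqueness I would invoke the Kostant codifferential $\del^*\colon \La^2\tg_{-1}^*\otimes\tg \to \tg_{-1}^*\otimes\tg$ and require that the curvature $\ka$ of $\tilde\om$ satisfy $\del^*\ka = 0$. The $\tg_{-1}$-component of $\ka$ is the torsion of the underlying affine connection, which vanishes by the choice of representative. The $\tg_0$- and $\tg_1$-components are governed by the Ricci tensor and a Cotton--York-type tensor of $\nabla$, and the normality equation becomes a tracelessness condition. The residual freedom in the construction is exactly the choice of $\ph$, which is $\tg_1$-valued; on the corresponding component of curvature $\del^*$ is an isomorphism by the standard Kostant cohomology computation for $|1|$-graded algebras, so there is a unique $\ph$ making $\tilde\om$ normal. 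This simultaneously yields existence and uniqueness up to isomorphism.

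The main obstacle I expect is the normalization step: identifying precisely which slot of $\ka$ is adjusted by varying $\ph$ and verifying that $\del^*$ is an isomorphism in that slot via Kostant's Hodge theory. The remaining work is bookkeeping --- checking $\tilde P$-equivariance, matching the action on $T^*M$ with the adjoint action on $\tg_1$, and confirming that two projectively equivalent choices of representative $\nabla$ produce isomorphic Cartan data.
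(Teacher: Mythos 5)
The paper does not actually prove this statement: Theorem \ref{T2.1} is quoted as a known classical result with a pointer to the literature (``see e.g.~\cite{luminy}''), so there is no internal proof to compare against. Your sketch is the standard direct prolongation argument for this $|1|$--graded case, and it is correct in outline; it is essentially the construction found in the cited references (which alternatively phrase it via tractor bundles, but the content is the same). Two points deserve more care than your write--up gives them. First, the ``tautological'' $\tg_1$--valued form $\ph$ does not literally exist: a point $\xi$ of the $T^*M$--factor gives an element of $\tg_1$ via the frame, not a $\tg_1$--valued one--form on $\tilde\G$. What one actually does is declare $\ph=0$ along the canonical section $\tilde\G_0\hookrightarrow\tilde\G$ and extend by $\tilde P$--equivariance (equivalently, postpone the entire choice of the $\tg_1$--component to the normalization step); the correct $\ph$ then restricts along the section determined by $\nabla$ to the Rho--tensor of $\nabla$, which involves its Ricci curvature and is in no sense tautological. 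Second, in the normalization step the relevant fact is not that $\del^*$ is an isomorphism on a curvature slot, but that $\del^*\o\del$ is invertible on the homogeneity--two part of $\tg_1\otimes\tg$, equivalently that $H^1(\tg_{-1},\tg)$ has no component in homogeneity two. By Kostant this cohomology is a single irreducible module sitting in homogeneity one --- which is precisely the freedom absorbed by specifying the projective class $[\nabla]$ --- so the homogeneity--two ambiguity (the Rho--tensor) is indeed uniquely fixed by $\del^*\ka=0$. Note also that for the full uniqueness assertion one must check that \emph{every} Cartan connection on $\tilde\G$ inducing the given underlying projective structure arises, up to isomorphism and the action of $\exp(\tg_1)$, from some representative $\nabla$ by a modification of the $\tg_1$--component only; this is standard but is a genuine step beyond uniqueness within your explicitly constructed family.
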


The relation between the Cartan geometry $(\tilde\G\to M,\tilde\om)$
and the projective structure can be described in several ways. On the
one hand, the oriented linear frame bundle of $M$, which we will
denote by $\tilde\G_0\to M$, can be viewed as a quotient of the Cartan
bundle $\tilde\G\to M$. Then the connection forms of the connections
in the projective class can be recovered by pulling back the
component $\tilde\om_0$ of $\tilde\om$ in $\fg_0$ along certain
sections of this quotient map. More easily, the unparametrized
geodesics of the projective structure are given by projections to $M$
of flow lines of the constant vector fields
$\tilde\om^{-1}(X)\in\X(\tilde\G)$ with $X\in\tg_{-1}$.

\subsection{Contact projective structures}\label{2.2}
Recall that a contact manifold $(M,H)$ is a smooth manifold $M$ of odd
dimension $2n+1$ together with a smooth subbundle $H\subset TM$ of
corank one which is maximally non--integrable. This means that the
bilinear bundle map $\Cal L:H\x H\to TM/H$ induced by the Lie bracket
of vector fields, which is called the \textit{Levi--bracket}, is
non--degenerate. The contact analog of projective structures was
formally introduced and thoroughly studied in \cite{Fox}. Similarly to
classical projective structures this contact analog can be described in
several equivalent ways.

The simplest description is via the analog of path geometries, for
which one only considers paths which are everywhere tangent to the
contact distribution $H$. Then a contact projective structure can be
defined as a smooth family of such contact paths with one path through
each point in each direction in $H$, which are among the geodesics of
some linear connection on $TM$. This is the definition used in
\cite{Fox}. While this implicitly also provides a definition as an
equivalence class of linear connections on $TM$, more work is needed
to obtain a nice description of this type. 

Indeed, Theorem A of \cite{Fox} provides a special class of such
connections. One starts with a contact form $\th\in\Om^1(M)$, i.e.~a
one--form whose kernel in each point $x\in M$ is the contact
distribution $H_x$. We will always assume that the contact structure
in question admits global contact forms. This amounts to the fact that
the line bundle $TM/H$ (or equivalently its dual) admit global nonzero
sections. Equivalently, these line bundles have to be orientable, and
we further assume that an orientation has been chosen. Then we can
talk about positive contact forms, and given one such form $\th$, any
other is obtained by multiplication by a positive smooth
function. Given a positive contact form $\th$, Theorem A of \cite{Fox}
shows the existence of a linear connection $\nabla$ on $TM$ which has
the given paths among its geodesics, satisfies $\nabla\th=0$ and
$\nabla d\th=0$ as well as normalization conditions on its torsion. 

Since $\nabla\th=0$, the connection $\nabla$ preserves the subbundle
$H\subset TM$, and of course the geodesics in contact directions
depend only on the restriction of $\nabla$ to a linear connection on
the vector bundle $H\to M$. Further, it turns out that all of $\nabla$
is determined by the restriction of $\nabla$ to an operator
$\Ga(H)\x\Ga(H)\to\Ga(H)$, a so--called \textit{partial connection}.
Finally, viewing the Levi--bracket $\Cal L$ as a bundle map $\La^2
H\to TM/H$, its kernel defines a corank one subbundle $\La^2_0
H\subset\La^2 H$. Since $\nabla d\th=0$, the linear connection on
$\La^2H$ induced by $\nabla$ preserves this subbundle, so $\nabla$
(respectively its restriction) is a (partial) \textit{contact
  connection}. Now parallel to the projective case, one can
define \textit{contact projective equivalence} of partial contact
connections, and characterize this in terms of the difference tensor.
This leads to a formula in terms of a smooth section $H^*$ which is
similar to the one for projectively equivalent connections, see
formula (2.8) of \cite{Fox}.

There is a significant difference to the case of projective
structures, which concerns torsion. For linear connections on the
tangent bundle, one can always remove the torsion without changing the
geodesics. This is no more true in the contact setting. By Theorem 2.1
of \cite{Fox}, the restrictions of the torsions of all the
representative connections $\nabla$ associated to contact forms as
above to $\La^2H^*\otimes H\subset\La^2T^*M\otimes TM$ coincide. This
is called the \textit{contact torsion} of the contact projective
structure. (In the setting of partial contact connections, one has to
further restrict to $(\La^2_0H)^*\otimes H$, but this needs only minor
adaptions.)

The model contact projective structure is given by the space of rays
in a symplectic vector space. Consider $\R^{2n+2}$ with the standard
linear symplectic form $\Om$. Then $\Om$ induces a contact structure
on the space $S^{2n+1}$ of rays, and the great circles tangent to the
contact subbundle (which can locally be realized as geodesics for the
standard flat connection on $\Bbb R^{2n+1}$) define a contact
projective structure. One of the main results of \cite{Fox} is the
construction of a canonical projective structure from a contact
projective structure.  For the homogeneous model, it is obvious how to
do this: One simply adds that great circles which are transverse to
the contact distribution, to obtain the homogeneous model of
projective structures. 

The contact projective structure on $S^{2n+1}$ constructed above is
evidently homogeneous under the symplectic group $G:=Sp(2n+2,\Bbb R)$.
It is easy to see that the actions of elements of $G$ are exactly
those diffeomorphisms of $S^{2n+1}$ which preserve both the contact
structure and the projective structure. Generalizing this result to
the curved case will be the main aim of Section \ref{4}. Now $G$ acts
transitively both on $\R^{2n+2}\setminus\{0\}$ and on $S^{2n+1}$, so
as homogeneous spaces $S^{2n+1}\cong G/P$ and
$\R^{2n+2}\setminus\{0\}\cong G/Q$, where $P$ is the stabilizer of the
ray generated by the first vector of the standard basis of
$\R^{2n+2}$, and $Q$ is the stabilizer of that vector. For the obvious
inclusion $G\to\tilde G$ (with $m=2n+1$) we get $P=G\cap\tilde P$ and
$Q=G\cap\tilde Q$. As in \ref{2.1}, $P$ is a parabolic subgroup in the
simple Lie group $G$. To obtain a nice presentation of the Lie algebra
$\fg$ of $G$, it is best to choose $\Om$ to be represented by the
matrix
$$
\pmat{0&0&1\\0&\Bbb J&0\\-1&0&0},
$$
with $\Bbb J=\smat{0 & \Bbb I_n\\ -\Bbb I_n &0}$ and $\Bbb I_n$
denoting identity matrix of rank $n$. Using this form, the Lie algebra
$\g=\frak{sp}(2n+2,\R)$ has the form
$$
\g=\left\{\pmat{a&U&w\\X&A&\Bbb JU^t\\z&-X^t\Bbb J&-a}\right\},
$$
with blocks of sizes $1$, $2n$ and $1$, $a,z,w\in\Bbb R$, $X\in\Bbb
R^{2n}$, $Z\in\Bbb R^{2n*}$, and $A\in\frak{sp}(2n,\Bbb R)$ (with
respect to $\Bbb J$). We obtain a grading
$\g=\g_{-2}\oplus\g_{-1}\oplus\g_0\oplus\g_1\oplus\g_2$ with
$\fg_{-2}$ corresponding to $z$, $\fg_{-1}$ to $X$, $\fg_0$ to $a$ and
$A$, and so on. By construction, $\frak p$ is formed by the matrices
which are block upper triangular, i.e.~satisfy $z=0$ and $X=0$, so
$\p=\g_0\oplus\fg_1\oplus\fg_2$. The subalgebra $\q\subset\frak p$
corresponds to those matrices, which in addition satisfy $a=0$. For
the algebra $\tg$ from \ref{2.1}, we simply obtain all matrices of the
same size, and the comparison with the description in \ref{2.1} shows
the various grading components and subalgebras.

In Theorem C of \cite{Fox}, the author proves existence of a canonical
Cartan connection associated to a contact projective structure, which
reads as follows:
\begin{thm*}                                    \label{T2.2}
  Let $(M,H)$ be a contact manifold which admits a global contact form
  and is endowed with a contact projective structure.  Then there
  exists a principal $P$--bundle $p:\Cal G\to M$ endowed with a Cartan
  connection $\om\in\Om^1(\Cal G,\g)$ such that
  $H=Tp(\om^{-1}(\g_{-1}\oplus\frak p))$ and the contact geodesics are
  projections to $M$ of flow lines of constant vector fields
  $\om^{-1}(X)\in\X(\G)$ with $X\in\g_{-1}$.  The pair $(\Cal G,\om)$ is
  uniquely determined up to isomorphism provided that one in addition
  requires the curvature of $\om$ to satisfy a normalization condition
  discussed in \ref{3.2} below.
\end{thm*}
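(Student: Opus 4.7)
The plan is to follow the standard existence--and--uniqueness scheme for canonical Cartan connections associated to parabolic geometries, specialized to the $|2|$--graded algebra $\g=\frak{sp}(2n+2,\R)$ with its parabolic $\p=\g_0\oplus\g_1\oplus\g_2$ described above. The approach parallels Fox's original proof of Theorem C of \cite{Fox}, but is carried out directly on the level of Cartan connections: one first builds a principal $P$--bundle $\G\to M$ together with an initial Cartan connection $\om\in\Om^1(\G,\g)$ out of the given data, and then uses the freedom in this construction to normalize $\om$ uniquely via the curvature condition to be formulated in \ref{3.2}.

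For the construction, I would choose a positive contact form $\th$ together with a representative linear connection $\nabla$ from the class of partial contact connections supplied by Theorem A of \cite{Fox}, so that $\nabla\th=0$, $\nabla d\th=0$, and the geodesics of $\nabla$ include the given paths. The form $\th$ and the symplectic structure on $H$ induced by its differential determine a principal bundle $\G_0\to M$ with structure group $G_0$ of adapted frames, and $\nabla$ induces a compatible principal connection on $\G_0$. Forming the associated bundle $\G:=\G_0\x_{G_0}P$ and using exponential coordinates on the unipotent radical of $P$ corresponding to $\p_+:=\g_1\oplus\g_2$, one writes down an initial $\g$--valued one--form $\om$ on $\G$ whose negative--grading components come from the soldering form of $\G_0$, whose $\g_0$--component comes from $\nabla$, and whose $\p_+$--component vanishes on the natural section $\G_0\hookrightarrow\G$. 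A routine check shows that $\om$ is then a Cartan connection with $H=Tp(\om^{-1}(\g_{-1}\oplus\p))$ and that the contact geodesics arise as projections of flow lines of $\om^{-1}(X)$ for $X\in\g_{-1}$.

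Next I would analyze the freedom in the construction. Replacing $\th$ by a positive multiple and $\nabla$ within its contact projective equivalence class (controlled by a section $H^*$ as recalled in \ref{2.2}) corresponds precisely to changing the embedding of the $G_0$--subbundle in $\G$, which modifies $\om$ by the addition of $\p_+$--valued terms along the various homogeneities. Using this freedom together with the Kostant codifferential $\del^*:\La^k\g_-^*\otimes\g\to\La^{k-1}\g_-^*\otimes\g$ on the complex computing $H^*(\g_-,\g)$, one adjusts $\om$ step by step in the homogeneity grading so that its curvature lies in $\Ker\del^*$. This is the normalization to be stated in \ref{3.2}, and once imposed the resulting $\om$ is independent of the initial choices.

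The main technical obstacle will be verifying that this iterative normalization is uniquely soluble, which reduces to the cohomological input that the Lie algebra cohomology groups $H^1(\g_-,\g)$ vanish in non--positive homogeneities together with the standard Hodge decomposition for $\del^*$. For $\frak{sp}(2n+2,\R)$ these facts are accessible via Kostant's theorem, so the freedom in the $\p_+$--valued corrections at each step is exactly enough to kill the non--normal part of the curvature. The uniqueness clause of the theorem then follows, since any two normal Cartan connections inducing the same contact projective structure must differ by an isomorphism of $P$--bundles covering the identity on $M$.
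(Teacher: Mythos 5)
Two things. First, the paper itself offers no proof of this statement: it is imported verbatim as Theorem C of \cite{Fox}, and Remark \ref{R2.2}(1) only notes that in the special case of vanishing contact torsion it follows from the general theory of parabolic geometries in \cite{C-S}. Your proposal is essentially a sketch of that general theory (adapted $G_0$--bundle built from $\th$ and $\nabla$, extension to a $P$--bundle, inductive normalization via the Kostant codifferential and Kostant's version of Bott--Borel--Weil), so at best it reproduces the torsion--free case.

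Second, and this is the genuine gap: you normalize the curvature into $\ker(\partial^*)$, but the normalization condition the theorem refers to (the one ``discussed in \ref{3.2}'') is \emph{not} the standard one. It is Fox's generalized condition that the curvature function take values in a strictly larger $P$--submodule $\Cal K\subset\La^2(\g/\p)^*\otimes\g$ containing $\ker(\partial^*)$ (Definition 4.1 of \cite{Fox}). This enlargement is unavoidable: as recalled in \ref{3.2}, for the pair $(\g,\p)$ the harmonic curvature $\ker(\square)$ is contained in $\g_1\wedge\g_1\otimes\g_0$ in homogeneity $2$ for $n>1$ (in $\g_1\wedge\g_2\otimes\g_0$ in homogeneity $3$ for $n=1$), so every regular normal Cartan geometry of type $(G,P)$ is automatically torsion free, and torsion freeness is equivalent to vanishing contact torsion of the induced contact projective structure. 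Hence for a structure with nonzero contact torsion there exists \emph{no} Cartan connection with curvature in $\ker(\partial^*)$ inducing it, and your inductive step must fail. The failure is localized precisely: the contact torsion is a homogeneity--one, $\g_{-1}$--valued component of the curvature; the $\frak p_+$--valued corrections you allow all have homogeneity $\geq 2$ and therefore cannot alter it, while the remaining freedom (rescaling $\th$, moving $\nabla$ within its contact projective class) leaves the contact torsion unchanged by Theorem 2.1 of \cite{Fox}. So the claim that the available freedom ``is exactly enough to kill the non--normal part of the curvature'' is false in homogeneity one. To obtain the theorem in the stated generality you must replace $\ker(\partial^*)$ by $\Cal K$ and rerun the normalization argument relative to that submodule, which is exactly what Fox does.
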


\begin{remark*}\label{R2.2}
  (1) The normalization condition in the Theorem is a generalization
  of the uniform normalization condition for parabolic geometries. As
  we shall discuss in more detail in \ref{3.2} below, it reduces to
  the standard condition if and only if the
  projective contact structure has vanishing contact torsion, see
  Proposition 4.1 of \cite{Fox}. In this special case, the Theorem
  follows from general results on parabolic geometries, see
  \cite{C-S}.

\noindent
(2) The description of the relation between the Cartan geometry and
the underlying contact projective structure in the Theorem is
different from the original one in \cite{Fox}. The characterization in
\cite{Fox} uses the ambient connection to be discussed in \ref{2.3}
below. Section 4.3 of \cite{Fox} discussed the characterization of
contact geodesics via the development of curves (induced by the Cartan
connection $\om$) into the homogeneous model $G/P=S^{2n+1}$.  Contact
geodesics on $M$ are exactly the curves which develop to the contact
geodesics in the model. In \cite{C-S-Z} it is shown how the
description in terms of development is equivalent to being a
projection of an integral curve of a certain type of constant vector
fields (of the Cartan connection $\om$). Then it suffices to observe
that the contact geodesics in the model $G/P$ are precisely the orbits
of one-parameter subgroups of $G$ generated by elements of $\g_{-1}$.

\noindent
(3) There is another distinguished family of curves in the model
space. As in (2), they may be either characterized via development or
as projections of integral curves of constant vector fields, but this
time with generator in $\fg_{-2}$. In view of the similarity to the
concept in CR geometry induced by Chern--Moser, these are called
\textit{chains}. In particular, a chain is uniquely determined by its
initial direction as an unparametrized curve. For the homogeneous
model $S^{2n+1}$, the chains are exactly those great circles which are
transverse to the contact distribution.
\end{remark*}

\subsection{Ambient descriptions}\label{2.3}
The basis of the construction of projective structure subordinate to a
contact projective structure is the so--called ambient description or
cone description of projective and contact projective structures. In
the projective case, this goes back to the work of Tracy Thomas in the
1930's, in the contact projective case it is due to Fox. In \cite{Fox}
the ambient connection is constructed first (in Theorem B) and then
used to construct a Cartan connection. Here we take the opposite point
of view, and use the Cartan connection to construct the ambient
connection.

The starting point for the ambient description of both types of
structure is a principal bundle $\L\to M$ with structure group $\Bbb
R_+$, namely the frame bundle of the bundle of
$(\frac{-1}{m+1})$--densities. In the contact projective case, it is
easy to see that one may also view this density bundle as a square
root of the bundle of positive contact forms. In the projective case,
$\Cal L$ can be constructed from the Cartan bundle $\tcg\to M$ via a
homomorphism $\tilde P\to\Bbb R_+$ with kernel $\tilde Q\subset\tilde
P$. Hence $\L\cong\tcg/\tilde Q$, and $\tcg\to\L$ is a principal
bundle with structure group $\tilde Q$, on which $\tilde\om$ is a
Cartan connection. In particular, $T\L\cong\tcg\x_{\tilde
  Q}(\tg/\tilde{\frak q})$ with the action of $\tilde Q$ coming from
the adjoint representation. In the contact projective case, there is a
completely analogous description in terms of the canonical Cartan
geometry $(\Cal G\to M,\om)$ induced by the contact projective
structure. In particular, $T\L\cong\G\x_Q(\fg/\frak q)$ in the contact
projective case. 

\begin{prop*}
  Consider $\tilde G:=SL(m+1,\Bbb R)$ and let $\tilde Q\subset\tilde
  G$ be the stabilizer of the first vector in the standard basis of
  $\Bbb R^{m+1}$. Then, as a representation of $\tilde Q$, $\tg/\tq$ is
  isomorphic to the restriction to $\tilde Q$ of the standard
  representation $\Bbb R^{m+1}$ of $\tilde G$. 
  
  If $m$ is odd, say $m=2n+1$, then the analogous statement holds for
  $G=Sp(2n+2,\Bbb R)$ and the stabilizer $Q\subset G$ of the first
  basis vector.
\end{prop*}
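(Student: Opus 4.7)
The plan is to realize both isomorphisms by the very orbit map that defines $\tilde Q$ (respectively $Q$) as a stabilizer, so that the identification is transparent and equivariance is essentially automatic.

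First, since $\tilde G$ acts transitively on $\R^{m+1}\setminus\{0\}$ and $\tilde Q$ is by definition the stabilizer of $e_1$, the orbit map $g\mapsto g\cdot e_1$ descends to a $\tilde G$--equivariant diffeomorphism $\tilde G/\tilde Q\to\R^{m+1}\setminus\{0\}$. Its tangent map at the base point $e\tilde Q$ is a linear isomorphism from $\tg/\tq$ onto $T_{e_1}(\R^{m+1}\setminus\{0\})$; since the target is an open subset of the vector space $\R^{m+1}$, this latter tangent space is canonically identified with $\R^{m+1}$ itself.

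Next I would verify equivariance with respect to $\tilde Q$. On the left--hand side, the isotropy action of $q\in\tilde Q$ on $\tg/\tq$ is induced by $\Ad(q)$, and on the right--hand side the isotropy action on $T_{e_1}(\R^{m+1}\setminus\{0\})\cong\R^{m+1}$ is the derivative at $e_1$ of $v\mapsto q\cdot v$. But that derivative is simply the linear map $v\mapsto q\cdot v$, i.e.\ the restriction of the standard representation to $\tilde Q$. Equivariance of the differential then follows at once from equivariance of the orbit map, because the diagram of $\tilde Q$--actions (conjugation on $\tilde G/\tilde Q$ versus left translation by $q$ on $\R^{m+1}\setminus\{0\}$) commutes at the base point.

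For the second assertion, the argument is word for word the same: the paper has already recorded that $G=Sp(2n+2,\R)$ acts transitively on $\R^{2n+2}\setminus\{0\}$ with $Q$ the stabilizer of the first basis vector, so the orbit map provides a $G$--equivariant diffeomorphism $G/Q\to\R^{2n+2}\setminus\{0\}$ whose differential at the base point gives the desired $Q$--equivariant isomorphism $\fg/\q\cong\R^{2n+2}$. No genuine obstacle is expected; the only thing to be careful about is the canonical identification of the tangent space of an open subset of a vector space with the vector space itself, and the fact that this identification intertwines the isotropy representation with the restriction of the standard representation, which is exactly what makes both halves of the statement a direct consequence of the same orbit--map construction.
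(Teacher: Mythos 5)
Your proposal is correct and is essentially the paper's argument in more geometric packaging: the differential of the orbit map $g\mapsto g\cdot e_1$ at the identity coset is exactly the ``first column'' map $X+\tilde{\frak q}\mapsto Xe_1$ that the paper uses, and your appeal to equivariance of the orbit map replaces the paper's direct computation that the first column of $AXA^{-1}$ equals the first column of $AX$ for $A\in\tilde Q$. The symplectic case is handled equally validly either way (you via transitivity of $Sp(2n+2,\R)$ on $\R^{2n+2}\setminus\{0\}$, the paper by restricting the statement to the subgroup $Q\subset\tilde Q$).
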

\begin{proof}
  The Lie subalgebra $\tq\subset\tg$ consists of all matrices for which
  all entries in the first column are zero. To describe the
  $\tilde Q$--representation $\tg/\tq$, we may thus simply look at the action
  of the adjoint representation of $\tilde Q$ to the first column of
  matrices. Since $\tilde G$ is a matrix group, the adjoint
  representation is given by conjugation. By definition, the first
  column of any matrix in $\tilde Q$ equals the first unit vector, so
  multiplying any matrix from the right by an element of $\tilde Q$
  leaves the first column unchanged. But this implies that for
  $A\in\tilde Q$ and $X\in\tg$, the first column of $AXA^{-1}$ equals
  the first column of $AX$, which implies the claim for $\tilde Q$.
  But then the same statement is true for any subgroup of $\tilde Q$,
  hence in particular for $Q=\tilde Q\cap G$ in the case of odd $m$. 
\end{proof}

Using this, we may view the bundle $T\L\to\L$ as the associated bundle
$\tcg\x_{\tilde Q}\Bbb R^{m+1}$ respectively $\G\x_Q\Bbb R^{2n+2}$,
and since the inducing representations are restrictions of
representations of $\tilde G$ respectively $G$, we can invoke the
general construction of \cite{tractors}. This shows that the Cartan
connection $\tilde\om$ respectively $\om$ induces a linear connection
on $T\L$. 

\begin{thm*}\label{T2.3}
  (1) The linear connection on $T\L$ induced by a projective structure
  on $M$ as described above coincides with the ambient connection from
  Theorem 3.1 of \cite{Fox}.

\noindent
(2) The linear connection on $T\L$ induced by a contact projective
structure on $M$ as described above coincides with the ambient
connection from Theorem B of \cite{Fox}.
\end{thm*}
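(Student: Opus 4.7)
The plan is a uniqueness argument. In \cite{Fox}, both ambient connections are characterized as the unique linear connection on $T\L$ satisfying a short list of axioms: $\Bbb R_+$-equivariance, torsion-freeness, a compatibility condition along the Euler (radial) vector field, and a Ricci-type normalization on the curvature (in the contact projective case also compatibility with the tautological symplectic form on $T\L$). I will verify that the canonical tractor connection $\nabla^T$ on $T\L$ induced by $\tilde\om$ (respectively $\om$) via \cite{tractors} satisfies the same list, whence both must coincide.

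Recall that on an equivariant function $f:\tcg\to\Bbb R^{m+1}$ representing a section $s$ of $T\L$, the tractor connection has the standard form
$$
(\nabla^T_\xi s)(\tilde x)=\tilde\xi(f)(\tilde x)+\tilde\om(\tilde\xi)\cdot f(\tilde x),
$$
where $\tilde\xi$ is any lift of $\xi$ and $\cdot$ denotes the infinitesimal $\tg$-action on $\Bbb R^{m+1}$. Equivariance under the principal $\Bbb R_+$-action on $\L$ is built into the construction, because this action is the residual action of the quotient $\tilde P/\tilde Q\cong\Bbb R_+$ of the structure group. Torsion-freeness of $\nabla^T$ follows from the vanishing of the torsion-type (i.e.~$\tg_{-1}$-valued) component of the Cartan curvature $\tilde\ka$, which is part of normality. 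The Euler field $\Cal E$ on $\L$ is represented by the constant equivariant function $\tilde x\mapsto e_0$, where $e_0$ is the first standard basis vector of $\Bbb R^{m+1}$; using that $\tilde Q$ stabilizes $e_0$, so that $\tq$ annihilates $e_0$, a direct calculation gives $\nabla^T_\eta\Cal E=\eta$ for every $\eta\in T\L$, matching Fox's compatibility condition. In the contact projective case these verifications carry over verbatim with $G$, $Q$ and $\Bbb R^{2n+2}$ replacing $\tilde G$, $\tilde Q$ and $\Bbb R^{m+1}$, and compatibility with the $G$-invariant symplectic form on $\Bbb R^{2n+2}$ is automatic from the associated bundle construction.

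The remaining axiom is the Ricci-type normalization on the curvature of $\nabla^T$. Since $\nabla^T$ is a tractor connection, its curvature is computed from the Cartan curvature $\tilde\ka$ by the standard tractor formula, so the trace appearing in Fox's condition is read off from a specific grading component of $\tilde\ka$. In the projective case this reduces to the well-known identification between the projective Schouten tensor and the relevant trace of the normal Cartan curvature, and the matching is immediate. In the contact projective case this is precisely the point where Fox's modified normalization condition on $\tilde\om$ (to be discussed in \ref{3.2}) enters: it is designed so that, after accounting for the unavoidable contact torsion, the trace on the tractor-curvature side agrees with Fox's trace condition on the ambient connection. I expect this component-wise comparison of the two normalizations to be the main obstacle; once it is carried out, uniqueness of the ambient connection forces $\nabla^T$ to equal Fox's ambient connection in both cases.
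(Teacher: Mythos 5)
Your overall strategy --- verify that the tractor-induced connection on $T\L$ satisfies Fox's defining axioms and then invoke the uniqueness clauses of his Theorems 3.1 and B --- is precisely the first of the two routes the paper itself sketches, so the approach is legitimate. However, there is a concrete error in case (2). You list torsion-freeness among the axioms and ``verify'' it from normality of the Cartan curvature. Fox's contact projective ambient connection from Theorem B is \emph{not} torsion free in general: its torsion is prescribed in terms of the contact torsion of the structure, and it is torsion free if and only if the contact torsion vanishes (the paper uses this fact essentially in Section \ref{3.3}). Correspondingly, the canonical Cartan connection $\om$ of a contact projective structure satisfies only Fox's generalized normalization condition, not the standard one, and its curvature function has a nontrivial component in $\fg_{-1}$ exactly when the contact torsion is nonzero; so the induced connection on $T\L$ is not torsion free either. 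The axiom you actually have to check is that the torsion of $\nabla^T$ reproduces Fox's prescribed torsion, which is an additional identification you have not addressed. Moreover, you explicitly defer the comparison of the curvature normalizations (``I expect this \dots to be the main obstacle''), and in this route that comparison \emph{is} the substantive content of the proof; as written, the argument is incomplete at its decisive step.

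For comparison, the paper's preferred argument sidesteps both difficulties by running Fox's own construction backwards. Fox passes from the ambient connection to a Cartan connection by lifting the principal $\Bbb R_+$--action to $T\L$, identifying the quotient $T\L/\Bbb R_+$ with the standard tractor bundle over $M$, and checking that the ambient connection descends to a tractor connection, which by \cite{tractors} determines the Cartan connection. One then only has to observe that the lift of the action is set up exactly so that the connection induced from $\tilde\om$ (respectively $\om$) via $T\L\cong\tcg\x_{\tilde Q}\Bbb R^{m+1}$ (respectively $\G\x_Q\Bbb R^{2n+2}$) descends to the \emph{same} standard tractor connection. The equivalence between tractor data and Cartan data then yields the coincidence of the two connections without any component-wise matching of normalization or torsion conditions. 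If you want to complete your version instead, you must (i) replace torsion-freeness by Fox's torsion prescription in case (2) and verify it against the $\fg_{-1}$--part of the curvature of $\om$, and (ii) actually carry out the identification of Fox's Ricci-type trace condition with the corresponding component of the (generalized) normalization condition on $\ka$.
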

\begin{proof}
  There are various ways to prove this, which all boil down to rather
  straightforward verifications. On the one hand, one may simply verify
  that the linear connections we have constructed satisfy the
  properties listed in the theorems of \cite{Fox}, and then invoke the
  uniqueness parts of these theorems. 
  
  Even easier, one may follow the construction of the Cartan
  connection in \cite{Fox} backwards. First, one lifts the principal
  $\Bbb R_+$--action on $\L$ to a free right action by vector bundle
  homomorphisms on $T\L$ in such a way that the orbit space $T\L/\Bbb
  R_+$ (which evidently is a vector bundle over $\L/\Bbb R_+=M$) can
  be identified with the standard tractor bundle of the structure in
  question. Then one shows that the ambient connection induces a
  tractor connection on that bundle, which by the general methods of
  \cite{tractors} gives rise to a Cartan connection. This corresponds
  to the fact that $\tilde P/\tilde Q\cong\Bbb R_+$ acts on
  $\tcg/\tilde Q=\L$ with orbit space $\tcg/\tilde P=M$ (and the
  analogous statement for $P/Q$).  Now one immediately verifies that
  the lift of the action is exactly defined in such a way that the
  linear connection on $T\L$ induces the usual tractor connection on
  the tractor bundles, which completes the proof.
\end{proof}

\section{The subordinate projective structure}\label{3}
Having collected the background, we can now move to proving the first
main results of this article. We show that the construction of a
projective structure subordinate to a contact projective structure in
\cite{Fox} can be interpreted as a generalized Fefferman construction.
This interpretation leads to immediate payoff, since it implies a
geometric description of the subordinate projective structure in terms
of chains.

\subsection{The Fefferman--type construction}\label{3.1}
The scheme for generalized Fefferman constructions is by now fairly
familiar, see \cite{C-two}, where also the application to contact
projective structures was suggested. As before, consider
$G=Sp(2n+2,\Bbb R)$ and $\tilde G:=SL(2n,\Bbb R)$, let
$\psi:G\hookrightarrow\tilde G$ be the obvious inclusion. Then put
$i:=\psi|_P:P\to\tilde P$ and $\al:=\psi':\fg\to\tg$. Now suppose that
we have given a contact manifold $(M,H)$ of dimension $m=2n+1$, which
is endowed with a contact projective structure, and let $(\G\to
M,\om)$ be the canonical Cartan geometry of type $(G,P)$ determined by
this structure as in \ref{2.2}. Then the homomorphism $i:P\to\tilde P$
defines a left action of $P$ on $\tilde P$, so we can form the
associated bundle $\tcg:=\G\x_P\tilde P\to M$. This clearly is a
principal bundle with structure group $\tilde P$, and we have a
natural map $j:\G\to\tcg$ induced by mapping $u\in\G$ to the class of
$(u,e)$. It is easy to prove (compare with 3.1 and 3.2 of \cite{CZ})
that there is a unique Cartan connection $\tilde\om\in\Om^1(\tcg,\tg)$
such that $j^*\tilde\om=\al\o\om$. 

This construction actually defines a functor from Cartan geometries of
type $(G,P)$ to Cartan geometries of type $(\tilde G,\tilde P)$, both
living on same manifolds. (The fact that we obtain a geometry on the
same manifold is due to the fact that $\tilde P\cap G$ is already a
parabolic subgroup of $G$. For other Fefferman--type constructions,
this is not the case. Then one has to pass to a parabolic subgroup
containing this intersection, and the new geometry will be defined on
the total space of a natural bundle.) Since any Cartan geometry of
type $(\tilde G,\tilde P)$ on a manifold $M$ gives rise to an
underlying projective structure, we obtain a functor mapping contact
projective structures to projective structures on the same manifold.
It is not clear, however, whether the Cartan connection $\tilde\om$ is
normal and hence coincides with the canonical Cartan connection
associated to the projective structure in general. This is a familiar
phenomenon of generalized Fefferman constructions.

Before we discuss the question of normality of $\tilde\om$, we give a
geometric description of the projective structure produced by the
generalized Fefferman construction. 
\begin{prop*}\label{P3.1}
  Let $(\G\to M,\om)$ be a Cartan geometry of type $(G,P)$ and let
  $(\tcg\to M,\tilde\om)$ be the Cartan geometry of type $(\tilde
  G,\tilde P)$ obtained by the generalized Fefferman construction. 
  
  Then the paths of the projective structure determined by $(\tcg\to
  M,\tilde\om)$ are the projections of the flow lines of the constant
  vector fields $\om^{-1}(X)\in\frak X(\G)$ generated by elements
  $X\in\fg_{-1}\cup\fg_{-2}$.  
\end{prop*}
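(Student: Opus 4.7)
The plan is to use the defining relation $j^*\tilde\om=\al\o\om$ to transport flow lines from $\G$ to $\tcg$ and then invoke Theorem \ref{T2.1} to recognize the paths of the projective structure.

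First, from the matrix presentations in \ref{2.1} and \ref{2.2} one sees that $\al$ maps $\fg_{-1}\oplus\fg_{-2}$ isomorphically onto $\tg_{-1}$, since both are spanned by the lower $2n+1$ entries of the first column of a matrix in $\tg$. In particular $\al(\fg_{-1}\cup\fg_{-2})\subset\tg_{-1}$. Applied to the tangent vector $\om^{-1}(X)(u)$, the relation $j^*\tilde\om=\al\o\om$ becomes $Tj\cdot\om^{-1}(X)(u)=\tilde\om^{-1}(\al(X))(j(u))$. Hence $j$ carries the flow of $\om^{-1}(X)$ on $\G$ to the flow of $\tilde\om^{-1}(\al(X))$ on $\tcg$, and because $\tilde p\o j=p$ their projections to $M$ agree. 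For $X\in\fg_{-1}\cup\fg_{-2}$, Theorem \ref{T2.1} then identifies these projections as paths of the subordinate projective structure, which yields the $\supseteq$ inclusion.

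For the reverse inclusion I use the fact that a projective structure admits exactly one path through each point in each direction. It then suffices to show that the curves constructed above exhaust every direction in $T_xM$. For $X\in\fg_{-1}$ at a fixed $u\in\G_x$, the velocities $Tp\cdot\om^{-1}(X)(u)$ already cover all of $H_x$, using $H=Tp(\om^{-1}(\fg_{-1}\oplus\p))$ from Theorem \ref{T2.2}. For a direction transverse to $H$, I fix a generator $Z\in\fg_{-2}$ and, by $P$-equivariance of $\om$, replace varying $u\in\G_x$ by varying $X$ in the adjoint orbit $\Ad(P)\cdot Z$. A brief computation shows that for $g=\exp Y$ with $Y\in\fg_1$ one has $\Ad(\exp Y)Z\equiv Z+[Y,Z]\pmod\p$, and as $Y$ ranges over $\fg_1$, $[Y,Z]$ ranges over all of $\fg_{-1}$. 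We therefore sweep out the entire affine hyperplane $Z+\fg_{-1}$ in $\fg/\p$, i.e.~every transverse direction.

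The main obstacle I anticipate is precisely this last orbit computation: the statement allows $\fg_{-1}\cup\fg_{-2}$ rather than $\fg_{-1}$ or $\fg_{-2}$ alone at a single frame, and exhausting transverse directions depends on the isomorphism $\ad_Z:\fg_1\to\fg_{-1}$ for a generator $Z\in\fg_{-2}$, which is the infinitesimal counterpart of the non-degeneracy of the Levi bracket defining the contact structure.
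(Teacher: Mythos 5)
There is a genuine gap in the first half of your argument. The claim that $\al$ maps $\fg_{-1}\oplus\fg_{-2}$ isomorphically \emph{onto} $\tg_{-1}$ is false: an element of $\fg_{-1}$ is the matrix with $X\in\R^{2n}$ in the first column \emph{and} $-X^t\Bbb J$ in the bottom row, and that second block lies in $\tg_0$ for the $|1|$--grading of $\tg$. So $\al(\fg_{-2})\subset\tg_{-1}$ is correct, but every nonzero element of $\fg_{-1}$ has a nontrivial component in $\tg_0$ (the correct statement is only that $\al$ induces an isomorphism $\fg/\p\to\tg/\tp$, which identifies first columns but forgets the rest of the matrix). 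Consequently, for $X\in\fg_{-1}$ the $j$--related field on $\tcg$ is $\tilde\om^{-1}(\al(X))$ with $\al(X)\notin\tg_{-1}$, and Theorem \ref{T2.1} does \emph{not} directly identify the projection of its flow lines as a path of the projective structure. This is exactly the point where the paper has to work: writing $\al(X)=\tilde X+\tilde A$ with $\tilde X\in\tg_{-1}$, $\tilde A\in\tg_0$, one checks $[\tilde A,\tilde X]=0$, so that if $\tilde c(t)$ is a flow line of $\tilde\om^{-1}(\tilde X)$ then $c(t)=\tilde c(t)\cdot\exp(t\tilde A)$ is a flow line of $\tilde\om^{-1}(\al(X))$ with the same projection to $M$; only then does one conclude that the contact directions also yield projective paths. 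Without some version of this step your $\supseteq$ inclusion is unproved for $X\in\fg_{-1}$.

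The rest of your argument is sound and matches the paper's logic: the transport of flow lines via $j^*\tilde\om=\al\o\om$ and $\tilde p\o j=p$, and the reduction of the reverse inclusion to exhausting all directions by uniqueness of projective paths. Your adjoint--orbit computation $\Ad(\exp Y)Z\equiv Z+[Y,Z]\pmod\p$ with $\ad_Z:\fg_1\to\fg_{-1}$ surjective is a correct and slightly more explicit justification of the transverse--direction claim, which the paper simply asserts. But the proof as written stands or falls on the false inclusion $\al(\fg_{-1})\subset\tg_{-1}$, so the missing commuting--correction argument must be supplied.
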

\begin{proof}
  It is well known that the paths of the projective structure can be
  obtained as the projections of the flow lines of the constant vector
  fields $\tilde\om^{-1}(\tilde X)$ for all elements $\tilde
  X\in\tg_{-1}$. Moreover, it is well known that there is exactly one
  such path through each point of $M$ in each direction. As we have
  seen above, viewing $\fg$ as a subalgebra of $\tg$, the Cartan
  connection $\tilde\om$ is characterized by $j^*\tilde\om=\om$. In
  particular, for $X\in\fg\subset\tg$, the constant vector fields
  $\om^{-1}(X)\in\frak X(\G)$ and $\tilde\om^{-1}(X)\in\frak X(\tcg)$
  are $j$--related. Hence their flows are $j$--related and in
  particular have the same projection to $M$.
  
  From the description of the Lie algebras $\g\subset\tg$ in \ref{2.1}
  and \ref{2.2}, we first see that $\g_{-2}\subset\tg_{-1}$. Hence for
  $X\in\fg_{-2}$, the projection of the flow line of $\om^{-1}(X)$ is
  among the paths of the projective structure. The tangent directions
  of these paths exhaust all directions which are transverse to the
  contact distribution.
  
  On the other hand, $\g_{-1}\subset\tg_{-1}\oplus\tg_0$ (with a
  nontrivial component in $\tg_0$ for any nonzero element of
  $\g_{-1}$). For $X\in\fg_{-1}$ let $\tilde X$ be the
  $\tg_{-1}$--component of $X$ (i.e.~the matrix with the same first
  column as $X$ and all other columns zero), and put $\tilde
  A=X-\tilde X\in\tg_0$. From the explicit presentations of $\fg$ and
  $\tg$ one immediately verifies that $[\tilde A,\tilde X]=0$, and
  hence $\Ad(\exp(-t\tilde A))(\tilde X)=\tilde X$ for all $t$. Now
  for $u\in\G$, let $\tilde c(t)$ be the flow line of the constant
  vector field $\tilde\om^{-1}(\tilde X)$. Then the curve
  $c(t):=\tilde c(t)\cdot\exp(t\tilde A)$ has the same projection to
  $M$ as $\tilde c(t)$, so this projection is among the paths
  determined by the projective structure. But denoting by $r$ the
  principal right action and by $\ze_{\tilde A}$ the fundamental
  vector field generated by $\tilde A$, one computes that
$$
c'(t)=Tr^{\exp(-t\tilde A)}\cdot\tilde c(t)+\ze_{\tilde A}(c(t)),
$$
and so $\tilde\om(c'(t))=\Ad(\exp(-t\tilde A))(\tilde X)+\tilde
A=X$ for all $t$. This shows that the flow line of $\om^{-1}(X)$ is
also among the paths of the induced projective structure. Since the
tangents of such paths exhaust all directions in the contact
distribution, this completes the proof. 
\end{proof}

\begin{remark*}\label{R3.1}
  (1) A nice alternative argument for the last part of the proof is as
  follows: Since $[\tilde A,\tilde X]=0$, we get
  $\exp(tX)=\exp(t\tilde X)\exp(t\tilde A)$, and hence the exponential
  curves generated by $X$ and $\tilde X$ have the same projection to
  $\tilde G/\tilde P$. Via development, this implies the same result
  for the flow lines of the constant vector fields. 

\noindent
(2) A regular Cartan geometry $(\G\to M,\om)$ of type $(G,P)$ as in
the proposition gives rise to a contact projective structure on $M$.
The distinguished paths (in contact directions) of this structure are
the flow lines of the vector fields $\om^{-1}(X)$ for $X\in\fg_{-1}$.
The proposition in particular says, that these are among the paths of
the projective structure obtained via the generalized Fefferman
construction. Hence the projective structure obtained from the
generalized Fefferman construction is \textit{subordinate} to the
initial contact projective structures in the sense of Definition 3.1
of \cite{Fox}.
\end{remark*}

\subsection{Normality}\label{3.2} 
As mentioned above, there are few general results on the compatibility
of Fefferman type constructions with normality of Cartan connections,
except for the fact that the result of a generalized Fefferman
construction is locally flat if and only if the original geometry is
locally flat. To discuss normality in our case, let us first recall
the normalization condition used for parabolic geometries. Consider a
semisimple Lie algebra $\fg$ with a parabolic subalgebra $\frak p$ and
the corresponding grading $\fg=\fg_-\oplus\fg_0\oplus\frak p_+$ (with
$\frak p=\fg_0\oplus\frak p_+$). Then the Killing form induces a
duality between $\fg/\frak p$ and $\frak p_+$, which is equivariant
for the natural action of any parabolic subgroup $P\subset G$ with Lie
algebra $\frak p$. Now there is a standard complex for computing the
Lie algebra homology of $\frak p_+$ with coefficients in $\fg$. The
differential in this complex is often denoted by $\partial^*$ and
referred to as the \textit{Kostant codifferential} since it can also
be obtained by dualizing a Lie algebra cohomology differential. For
the normalization condition, we need the map
$$
\partial^*:\La^2\frak p_+\otimes\fg\to \frak p_+\otimes\fg,
$$
which on decomposable elements is given by
$$
\partial^*(Z\wedge W\otimes
A)=-W\otimes[Z,A]+Z\otimes[W,A]-[Z,W]\otimes A.
$$
Now the curvature of a Cartan geometry $(\Cal G,\om)$ of type $(G,P)$
can be described by the curvature function $\ka:\Cal G\to
L(\La^2(\frak g/\frak p),\fg)$, which is characterized by 
$$
\ka(u)(X+\frak p,Y+\frak p)=d\om(\om^{-1}(X)(u),\om^{-1}(Y)(u))+[X,Y].
$$
As noted above, the target space of $\ka$ can be identified with
$\La^2\frak p_+\otimes\fg$, and the geometry is called \textit{normal}
if $\partial^*\o\ka=0$.

This is the normalization condition used for projective structures in
Theorem \ref{T2.1}. For contact projective structures, this
normalization condition is not general enough, however. The reason for
this can be seen from one of the nice properties of the normalization
condition given by the Kostant codifferential. Namely, there is an
operator $\square:\La^2\frak p_+\otimes\fg\to\La^2\frak p_+\otimes\fg$
called the \textit{Kostant Laplacian}. This is not equivariant for the
action of the parabolic subgroup $P$ but only for its Levi component,
a subgroup $G_0\subset P$ with Lie algebra $\fg_0$. Now due to the
gradings on $\frak p_+$ and $\fg$, the space $\La^2\frak
p_+\otimes\fg$ is naturally graded, so one may split the curvature
function $\ka$ into homogeneous components with respect to this
gradings. One has to assume throughout that the geometry is regular,
so all homogeneous components of degree less or equal to zero vanish
identically. If this is the case, then it turns out that the lowest
nonzero homogeneous component of $\ka$ always has values in
$\ker(\square)$. This is extremely useful, since $\ker(\square)$ can
be computed explicitly as a $G_0$--representation (which is the main
step towards the proof of Kostant's version of the Bott--Borel--Weil
theorem in \cite{Kostant}).

For the parabolic pairs $(\fg,\frak p)$ and $(\tg,\tp)$ considered in
Section \ref{2}, the description of $\ker(\square)$ is particularly
easy. In each case, this is an irreducible representation of $G_0$,
contained in one homogeneity. The result is listed in the tables
below, and $\ker(\square)$ is always the component of highest weight
in the indicated subrepresentation.

\begin{tabular}{cc}
\parbox[c][][c]{0.45\textwidth}{
\begin{center}
$(\frak g,\frak p)$, $n=1$ \\[6pt] 
\begin{tabular}{|c|c|}
  \hline homog. & \parbox[c][1.4\totalheight][c]{55pt}{contained in} \\
  \hline 3 & \parbox[c][1.4\totalheight][c]{55pt}{$\g_1\wedge\g_2\otimes\g_0$}\\
  \hline
\end{tabular}
\end{center}}&
\parbox[c][][c]{0.45\textwidth}{
\begin{center}
$(\frak g,\frak p)$, $n>1$ \\[6pt]
\begin{tabular}{|c|c|}
  \hline homog. & \parbox[c][1.4\totalheight][c]{55pt}{contained in}\\
  \hline 2 & \parbox[c][1.4\totalheight][c]{55pt}{$\g_1\wedge\g_1\otimes\g_0$} \\
  \hline
\end{tabular}
\end{center}}
\end{tabular}

\bigskip

\begin{tabular}{cc}
\parbox[c][][c]{0.45\textwidth}{
\begin{center}
$(\tg,\tp)$, $m=2$ \\[6pt] 
\begin{tabular}{|c|c|}
  \hline homog. & \parbox[c][1.4\totalheight][c]{55pt}{contained in} \\
  \hline 3 &
  \parbox[c][1.4\totalheight][c]{55pt}{$\tg_1\wedge\tg_1\otimes\tg_1$}\\
  \hline
\end{tabular}
\end{center}}&
\parbox[c][][c]{0.45\textwidth}{
\begin{center}
$(\tg,\tp)$, $m>2$ \\[6pt]
\begin{tabular}{|c|c|}
  \hline homog. & \parbox[c][1.4\totalheight][c]{55pt}{contained in}\\
  \hline 2 &
  \parbox[c][1.4\totalheight][c]{55pt}{$\tg_1\wedge\tg_1\otimes\tg_0$} \\
  \hline
\end{tabular}
\end{center}}
\end{tabular}

\bigskip

In particular, we see that in all cases the maps in $\ker(\square)$
have values in $\frak p\subset\fg$ respectively in
$\tp\subset\tg$. Since the same is evidently true for all maps of
higher homogeneous degree, we see that in both cases the curvature
function of a regular normal parabolic geometry always has values in
$\La^2\frak p_+\otimes\frak p$, i.e.~such geometries are always
torsion free. Now it is easy to see that a Cartan geometry of type
$(G,P)$ is torsion free if and only if the induced contact projective
structure has vanishing contact torsion. 

To deal with contact projective structures with non--vanishing contact
torsion, one therefore has to generalize the normalization condition,
and this has been done in \cite{Fox}. In Definition 4.1 of that
article, the author explicitly describes a $P$--submodule $\Cal
K\subset\wedge^2(\g/\p)^*\otimes\g$, which consists of maps of
positive homogeneity and contains $\ker(\partial^*)$. The
normalization condition used in Theorem \ref{T2.2} then is that the
curvature function has values in $\Cal K$. 

Having the necessary background at hand, we can now clarify
compatibility of the generalized Fefferman construction with
normality.

\begin{thm*}\label{T3.2}
  Let $(\G\to M,\om)$ be a Cartan geometry of type $(G,P)$ satisfying
  the generalized normalization condition discussed above and let
  $(\tilde\G\to M,\tilde\om)$ be the result of the generalized
  Fefferman construction from \ref{3.1}.  Then $\tilde\om$ is normal
  if and only if $\om$ is torsion free. Moreover, $\tilde\om$ is
  locally flat if and only if $\om$ is locally flat.
\end{thm*}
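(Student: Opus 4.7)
The plan is to pull back the structure equation of $\tilde\om$ along $j$ to relate the two curvature functions, and then handle the two directions of the equivalence separately: the forward one via Fox' ambient description (Theorem~\ref{T2.3}), the reverse one via the homogeneity analysis of \ref{3.2}. Concretely, applying $d$ to $j^*\tilde\om=\al\o\om$ and using that $\al$ is a Lie algebra homomorphism gives $j^*\tilde K=\al\o K$; at the level of curvature functions this amounts to $\tilde\ka(j(u))(\bar\al X,\bar\al Y)=\al(\ka(u)(X,Y))$ for $X,Y\in\g_-$, where $\bar\al\colon\g/\p\cong\tg/\tp$ is the isomorphism induced by $\al$, and by $\tilde P$--equivariance this determines $\tilde\ka$ everywhere on $\tcg=\G\x_P\tilde P$.

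For the forward direction assume the contact torsion of $\om$ vanishes. By Proposition~4.1 of \cite{Fox} and the discussion in \ref{3.2}, $\ka$ takes values in $\La^2\p_+\otimes\p$; since $\al(\p)\subset\tp$, so does $\tilde\ka$, so $\tilde\om$ is torsion--free. By naturality of the Fefferman associated--bundle construction, the tractor connection on $T\L$ induced from $\tilde\om$ coincides with that induced from $\om$, which by Theorem~\ref{T2.3}(2) is Fox' ambient connection; by Fox' identification of the subordinate projective structure (as in Proposition~\ref{P3.1}) this equals the ambient connection of that subordinate structure, which by Theorem~\ref{T2.3}(1) is the tractor of the canonical normal projective Cartan connection. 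Bijectivity of the tractor--to--Cartan correspondence recalled in \ref{2.3} then forces $\tilde\om$ to coincide with this canonical normal Cartan connection, so $\tilde\om$ is normal.

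For the reverse direction suppose the contact torsion of $\om$ is non--zero, giving $\ka$ a non--vanishing component in $\La^2\g_1\otimes\g_{-1}$. As noted in the proof of Proposition~\ref{P3.1}, $\al(\g_{-1})\subset\tg_{-1}\oplus\tg_0$ has a non--trivial projection to $\tg_{-1}$ for any non--zero element; hence $\tilde\ka$ carries a non--vanishing component in $\La^2\tg_1\otimes\tg_{-1}$, i.e.\ $\tilde\om$ has non--trivial torsion of $\tg$--homogeneity $1$. Since $\tilde\om$ inherits regularity from $\om$ through the curvature identity, the tables in \ref{3.2} for $(\tg,\tp)$ imply that any regular normal Cartan connection of type $(\tilde G,\tilde P)$ is torsion--free, so $\tilde\om$ cannot be normal. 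Local flatness is immediate from injectivity of $\al$ and the curvature identity: $\tilde K\equiv 0$ on $j(\G)$ if and only if $K\equiv 0$, and by $\tilde P$--equivariance the same holds on all of $\tcg$.

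The main subtlety lies in the forward direction: a naive identity $\tilde\partial^*\o\al_*=\al_*\o\partial^*$ fails because $\al$ does not respect the gradings (as recorded in \ref{3.1}, $\al(\g_{-1})$ has a non--zero component in $\tg_0$, and symmetrically for $\al(\g_1)$), so the Kostant codifferentials of $\g$ and $\tg$ cannot be matched by direct transport along $\al$. Routing through Fox' ambient description and the tractor--to--Cartan correspondence sidesteps this delicate computation entirely.
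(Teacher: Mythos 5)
Your argument is correct, but the way you establish normality of $\tilde\om$ in the torsion--free case is genuinely different from the paper's. The paper stays entirely on the level of Cartan curvature: it observes that torsion freeness together with the generalized normalization condition forces $\ka$ to take values in $\ker(\partial^*)$ (Proposition 4.1 of \cite{Fox}), transports the problem along the $P$--equivariant map $\Ph=\La^2\ph\otimes\al$, reduces to the harmonic curvature via Corollary 3.2 of \cite{C-tw}, and then checks $\Ph(w)\in\ker(\tilde\partial^*)$ on a highest weight vector of the irreducible $G_0$--module $\ker(\square)$ --- precisely the device that circumvents the failure of $\tilde\partial^*\o\Ph=\Ph\o\partial^*$ which you correctly identify as the obstruction to a naive argument. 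You instead route through the ambient/tractor picture: the tractor connection induced by $\tilde\om$ on $T\L$ agrees with the one induced by $\om$, which by Theorem \ref{T2.3} is Fox' ambient connection; vanishing contact torsion makes that connection torsion free, hence equal to its symmetrization, i.e.\ to the ambient connection of the subordinate projective structure, i.e.\ to the tractor connection of the \emph{normal} projective Cartan connection; injectivity of the Cartan--to--tractor correspondence of \cite{tractors} then yields normality of $\tilde\om$. This is legitimate and not circular (the identification of the two tractor connections is exactly what the paper proves independently in Proposition \ref{P3.3}, and the fact that vanishing contact torsion makes the contact projective ambient connection coincide with that of the subordinate projective structure comes from \cite{Fox}), but it outsources the analytic content to Fox' Theorems B and 3.1 and to the equivalence of categories in \cite{tractors}, whereas the paper's argument is self--contained on the Cartan side and is the one that generalizes to Fefferman--type constructions admitting no ambient description. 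Two small points: the one--line appeal to ``naturality'' for the coincidence of the two tractor connections is the load--bearing step and deserves the computation with equivariant functions spelled out in the proof of Proposition \ref{P3.3}; and your reference to Proposition \ref{P3.1} for Fox' identification of the subordinate structure should rather point to the discussion at the beginning of \ref{3.3}. The reverse implication and the local flatness statement are handled essentially as in the paper.
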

\begin{proof}
  The description of the generalized Fefferman construction in
  \ref{3.1} immediately leads to the relation between the curvatures
  of the two geometries. Let us denote by $\tilde\ka$ and $\ka$ the
  curvature functions of $\tilde\om$ and $\om$. Noting that
  $\al=\psi':\fg\to\tg$ is a homomorphism of Lie algebras, we obtain
  (compare with Proposition 3.3 of \cite{CZ})
\begin{equation}\label{eq}
  \tilde\ka(j(u))(\tilde X+\tp,\tilde Y+\tp)= \al(\ka(u)(X+\frak
  p,Y+\frak p)),
\end{equation}
for all $u\in\G$, $\tilde X,\tilde Y\in\tg$ and $X,Y\in\fg$ such that
$\al(X)+\tp=\tilde X+\tp$ and likewise for $Y$ and $\tilde Y$. Note
that for given $\tilde X$, we can always find an element $X$ with this
property, since $\al$ induces a linear isomorphism $\fg/\frak
p\to\tg/\tp$. Note also, that by equivariancy \eqref{eq} uniquely
determines $\tilde\ka$. Since $\al$ is injective, we see that
$\tilde\ka$ vanishes identically if and only if $\ka$ vanishes
identically, so the statement about local flatness follows readily. 

Second, $\tilde\om$ by definition is torsion free if and only if
$\tilde\ka$ has values in $\La^2(\tg/\tp)\otimes\tp$, and since
$\tp\subset\tg$ is $\tilde P$--invariant this is equivalent to the
same statement for $\tilde\ka\o j$. Since $\al^{-1}(\tp)=\frak p$ by
construction, the latter statement via \eqref{eq} is equivalent to
$\ka$ having values in $\La^2(\fg/\frak p)\otimes\frak p$ and hence to
torsion freeness of $\om$. As we have seen above, normal Cartan
connections of type $(\tilde G,\tilde P)$ are always torsion free, so
we see that normality of $\tilde\om$ implies torsion freeness of
$\om$. 

Let us conversely assume that $\om$ is torsion free and satisfies the
generalized normalization condition from Theorem \ref{2.2}. Then by Proposition
4.1 of \cite{Fox} the curvature function $\tilde\ka$ has values in
$\ker(\partial^*)$, so we may apply general results for parabolic
geometries. The isomorphism $\underline{\al}:\fg/\frak p\to\tg/\tp$
induced by $\al$ is equivariant over the inclusion
$P\hookrightarrow\tilde P$, so the same is true for
$\ph:=(\underline{\al}^{-1})^*:(\fg/\frak p)^*\to(\tg/\tp)^*$. Hence
also the map $\Ph:=\La^2\ph\otimes\al$ is equivariant in the same
sense, and in terms of this map we can write \eqref{eq} as $\ka\o
j=\Ph\o\ka$. Now let $\tilde\partial^*$ be the Kostant codifferential
associated to $(\tg,\tp)$. Then equivariancy of $\Ph$ implies that
$\Ph^{-1}(\ker(\tilde\partial^*))\subset\La^2(\fg/\frak
p)^*\otimes\fg$ is a $P$--submodule. Clearly, normality of $\tilde\om$
is equivalent to the fact that $\ka$ has values in this
$P$--submodule. By Corollary 3.2 of \cite{C-tw}, this is equivalent to
the fact that the harmonic part $\ka_H$ of the curvature function has
values in there.

As discussed above, $\ka_H(u)$ has values in
$\ker(\square)\subset\wedge^2(\g/\p)^*\otimes\g$, which is an
irreducible representation of $G_0$. Denoting by $w\in\ker(\square)$ a
highest weight vector in this representation, it is therefore
sufficient to prove that $\Phi(w)\in\ker(\tilde\partial^*)$. This can
be verified by a simple direct computation. Alternatively, it is
easy to verify that the $G_0$--representation $(\tg/\tp)^*\otimes\tg$
in which $\tilde\partial^*$ has values does not contain an irreducible
component isomorphic to $\ker(\square)$.
\end{proof}

\begin{remark*}\label{R3.2}
  The proof of this theorem is significantly easier then the proofs of
  normality for the classical Fefferman construction (see
  \cite{fefferman}) or other generalized Fefferman constructions. This
  is due to the fact that $G\cap\tilde P=P$ and hence
  $\fg\cap\tp=\frak p$ in our case. The second property directly shows
  that torsion freeness of $\om$ implies torsion freeness of
  $\tilde\om$, which otherwise needs more involved proofs. On the
  other hand, the first property implies equivariancy of the map
  $\Phi$, which together with the general results obtained using BGG
  sequences allow a reduction of the problem to harmonic curvature.
\end{remark*}

\subsection{Comparing to the construction by Fox}\label{3.3}
For a contact projective structure on a contact manifold $(M,H)$,
there is the canonical Cartan geometry $(\G\to M,\om)$ from Theorem
\ref{T2.2}. Applying to this geometry the generalized Fefferman
construction from \ref{3.1}, we obtain a canonical projective
structure on $M$, which is subordinate to the contact projective
structure in the sense of Remark \ref{R3.1}(2). Our final aim in this
section is to prove that the result coincides with the subordinate
projective structure constructed in Section 3.3 of \cite{Fox}.

Fox' construction is based on the ambient description of contact
projective and projective structures as discussed in \ref{2.3}. There
we already noticed that the spaces on which the ambient connection is
defined are the same for both types of structures. Moreover, the
ambient connection associated to a contact projective structure in Theorem B
of \cite{Fox} satisfies all the properties of a projective ambient
connection from Theorem 3.1 of \cite{Fox}, except for torsion
freeness. Symmetrizing the contact projective ambient connection, one
obtains a torsion free connection, which then is the canonical
connection associated to a projective structure. This is the canonical
projective structure as defined by Fox. Notice that the ambient
connection associated to a contact projective structure is torsion
free if and only if the structure has vanishing contact torsion. This
in turn is equivalent to the fact that this ambient connection
coincides with the ambient connection of the canonical subordinate
projective structure defined by Fox, which is the analog of Theorem
\ref{T3.2} in this setting.

\begin{prop*}\label{P3.3}
  For a contact projective structure on a contact manifold $(M,H)$,
  the subordinate projective structure obtained via the generalized
  Fefferman construction coincides with the subordinate projective
  structure constructed in Section 3.3 of \cite{Fox}.
\end{prop*}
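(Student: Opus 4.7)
The plan is to use the common ambient bundle $\L\to M$ to bridge the two constructions. A projective structure on $M$ is recovered from its ambient connection on $T\L$ via the description in Section \ref{2.3}, so it suffices to identify the relevant ambient connections.

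By Theorem \ref{T2.3}(2), the linear connection $\nabla^{cp}$ on $T\L$ induced by the contact projective Cartan connection $\om$ via the general tractor construction coincides with Fox's contact projective ambient connection. On the other hand, the Fefferman--type construction produces a Cartan geometry $(\tcg,\tilde\om)$ of type $(\tilde G,\tilde P)$, and the same tractor machinery yields a linear connection $\tilde\nabla$ on the standard tractor bundle $T\L\cong\tcg\x_{\tilde Q}\R^{2n+2}$. The key step is to prove $\tilde\nabla=\nabla^{cp}$. This follows from the defining relation $j^*\tilde\om=\al\o\om$ together with the fact that the $G$--representation $\R^{2n+2}$ is the restriction along $\psi$ of the standard $\tilde G$--representation: the tractor connection is built by the same universal formula from the Cartan connection and the inducing representation, so under the natural isomorphism $\G\x_Q\R^{2n+2}\cong\tcg\x_{\tilde Q}\R^{2n+2}$ induced by $j$, the connections built from $\om$ and from $\tilde\om$ must agree.

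Granting this identification, the conclusion is quick. Fox's subordinate projective structure is by definition the projective structure whose ambient connection is the torsion--free symmetrization of $\nabla^{cp}$, and by Theorem \ref{T2.3}(1) this is the ambient connection induced by the canonical normal Cartan connection of Fox's projective structure. Meanwhile, the Fefferman projective structure is the projective structure underlying $(\tcg,\tilde\om)$, whose ambient connection is $\tilde\nabla=\nabla^{cp}$. Since symmetrizing a linear connection does not alter its geodesics, and since a projective structure is determined by its family of unparametrized paths, the two subordinate projective structures coincide.

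The main obstacle is the verification that $\tilde\nabla=\nabla^{cp}$. This is essentially a bookkeeping exercise once one writes out the tractor connection attached to an arbitrary Cartan connection, but care is needed because $(\tcg,\tilde\om)$ need not be normal (by Theorem \ref{T3.2}, normality is equivalent to vanishing contact torsion), so one must not confuse $\tilde\om$ with the canonical normal Cartan connection of the induced projective structure. An alternative route that avoids this identification is to compare the two families of paths directly: by Proposition \ref{P3.1} the Fefferman paths are projections of flow lines of $\om^{-1}(X)$ for $X\in\g_{-1}\cup\g_{-2}$, and one could show by lifting to $\L$ that these coincide with the geodesics of Fox's symmetrized ambient connection.
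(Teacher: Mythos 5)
Your proposal is correct and follows essentially the same route as the paper: identify $\L$ as $\G/Q\cong\tcg/\tilde Q$, show via $j^*\tilde\om=\al\o\om$ and the compatibility of the inducing representations that the two tractor/ambient connections on $T\L$ agree, and conclude because symmetrizing the ambient connection does not change its geodesics, whose projections are the paths. The paper merely makes your ``bookkeeping exercise'' explicit by describing both connections on $Q$-- respectively $\tilde Q$--equivariant functions via the formula $\hat\xi\cdot f+\om(\hat\xi)\o f$ and noting $f=\tilde f\o j$; your cautionary remark that $\tilde\om$ need not be normal is well placed and consistent with how the paper argues.
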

\begin{proof}
  Let $(\G\to M,\om)$ be the Cartan geometry associated to the contact
  projective structure as in Theorem \ref{2.2} and let $(\tcg\to
  M,\tilde\om)$ be the result of the generalized Fefferman
  construction from \ref{3.1}. We want to show that, via the procedure
  from \ref{2.3}, these two Cartan geometries lead to the same ambient
  connection. From property 4 of an ambient connection in Theorem 3.1
  of \cite{Fox} one easily concludes that the paths of a projective
  structure can be realized as projections of geodesics of the ambient
  connection. Since symmetrizing the projective ambient connection
  does not change its geodesics, this will complete the proof.
  
  To compute the ambient connections, recall that we can realize the
  space $\L$ on which the ambient connection is defined as $\Cal G/Q$
  or $\tcg/\tilde Q$. Further, $\fg/\frak q\cong\Bbb R^{2n+2}$ as a
  representation of $Q$ and $\tg/\tq\cong\Bbb R^{2n+2}$ as a
  representation of $\tilde Q$. These identifications are compatible
  with the isomorphism $\fg/\frak q\cong\tg/\tq$ induced by the
  inclusion $\fg\hookrightarrow\tg$. Using $T\L\cong\G\x_Q(\fg/\frak
  q)$, vector fields on $\L$ are in bijective correspondence with
  $Q$--equivariant smooth functions $\G\to\Bbb R^{2n+2}$. Likewise,
  $T\L\cong\tcg\x_{\tilde Q}(\tg/\tq)$ identifies such vector fields
  with $\tilde Q$--equivariant smooth functions $\tcg\to\Bbb
  R^{2n+2}$. The correspondence between functions and vector fields is
  given by taking preimages under the Cartan connections, and then
  projecting to the base. For the canonical inclusion $j:\G\to\tcg$,
  we by definition have $j^*\tilde\om=\om$ (identifying $\fg$ with a
  subset of $\tg$). This immediately implies that if $\tilde
  f:\tcg\to\Bbb R^{2n+2}$ is the equivariant function corresponding to
  $\eta\in\frak X(\L)$, then the equivariant function $f:\G\to\Bbb
  R^{2n+2}$ corresponding to $\eta$ is simply given by $f=\tilde f\o
  j$.
  
  As we know from \ref{2.3}, the ambient connections are special cases
  of tractor connections, so their actions are described in terms of
  equivariant functions in the proof of Theorem 2.7 in
  \cite{tractors}. We first look at the contact projective ambient
  connection. Given another vector field $\xi\in\frak X(\L)$, we first
  have to choose a lift $\hat\xi\in\frak X(\G)$. Then the function
  $\G\to\Bbb R^{2n+2}$ corresponding to the covariant derivative of
  $\eta$ in direction $\xi$ is given by $\hat\xi\cdot f+\om(\hat\xi)\o
  f$. (In the first summand the vector field differentiates the
  function, while in the second, $\om(\hat\xi)$ acts algebraically on
  the values of $f$.) Now we can extend $Tj\o\hat\xi$ to a lift
  $\tilde\xi\in\frak X(\tcg)$ of $\xi$. From $j^*\tilde\om=\om$ we
  conclude that $\tilde\om(\tilde\xi)\o j=\om(\hat\xi)$, and by
  construction $\hat\xi\cdot f=(\tilde\xi\cdot\tilde f)\o j$. But this
  says that the function describing the covariant derivative with
  respect to the projective ambient connection is just the equivariant
  extension of the functions describing the covariant derivative with
  respect to the contact projective ambient connection. This shows
  that the two connections actually coincide, which completes the
  proof.
\end{proof}

Of course, the nice geometric interpretation of the subordinate
projective structure provided by the generalized Fefferman
construction in Proposition \ref{3.1} now carries over to the
construction by Fox. 
\begin{cor*}\label{C3.3}
  In the language of paths, the canonical subordinate projective
  structure defined in \cite{Fox} is obtained by adding the chains of
  a contact projective structure to the contact geodesics.
\end{cor*}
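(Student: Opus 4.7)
The plan is to simply assemble the pieces that have already been proved. By Proposition \ref{P3.3}, the canonical subordinate projective structure of Fox coincides, as a projective structure on $M$, with the one obtained from the generalized Fefferman construction of \ref{3.1}. Hence it suffices to identify the family of unparametrized paths produced by the Fefferman construction.

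That identification is provided by Proposition \ref{P3.1}, which asserts that the paths of the Fefferman projective structure are exactly the projections to $M$ of flow lines of the constant vector fields $\om^{-1}(X)$ with $X\in\fg_{-1}\cup\fg_{-2}$. Thus I would split this family according to the grading component of $X$. For $X\in\fg_{-1}$, the projection of the flow line of $\om^{-1}(X)$ is by Theorem \ref{T2.2} one of the contact geodesics of the given contact projective structure, and as $X$ varies over $\fg_{-1}$ these curves exhaust the contact geodesics in all directions of $H$. For $X\in\fg_{-2}$, the same projection is, by the definition recalled in Remark \ref{R2.2}(3), a chain of the contact projective structure; as $X$ varies over $\fg_{-2}\setminus\{0\}$, the initial directions exhaust all directions transverse to $H$, so one obtains every chain.

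Putting the two pieces together, the path family of the subordinate projective structure is exactly the union of the contact geodesics and the chains, which is the statement of the corollary. There is no real obstacle here: every geometric ingredient (the description of contact geodesics and chains as flow projections of constant fields from $\fg_{-1}$ and $\fg_{-2}$ respectively, the identification of the Fefferman paths, and the coincidence of the Fefferman and Fox constructions) has already been established. The only point worth highlighting explicitly is that since a chain is determined by its initial transverse direction and a contact geodesic by its initial direction in $H$, the decomposition $\fg_{-1}\cup\fg_{-2}$ really does correspond on the nose to the disjoint union of contact geodesics and chains, with no redundancy and no omissions.
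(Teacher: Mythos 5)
Your proposal is correct and is exactly the argument the paper intends: the corollary is stated as an immediate consequence of Proposition \ref{P3.3} (identifying Fox's construction with the Fefferman one) combined with Proposition \ref{P3.1}, with the $\fg_{-1}$-flow lines giving the contact geodesics (Theorem \ref{T2.2}) and the $\fg_{-2}$-flow lines giving the chains (Remark \ref{R2.2}(3)). No discrepancy to report.
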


\section{The path geometry of chains}\label{4}
The chains in a contact projective structure determine a generalized
path geometry. For Lagrangean contact structures and partially
integrable almost CR structures, this path geometry and its relation to the
parabolic geometry associated to the original structure has been
discussed in \cite{CZ}. For contact projective structures, this
relation is much easier, since the path geometry of chains is obtained
as a restriction of the path geometry induced by the subordinate
projective structure. This is a simple instance of the general
construction of correspondence spaces from \cite{C-tw}. Therefore, the
analogs of the results form \cite{CZ} on the path geometry of chains
are rather easy to deduce. Still this path geometry turns out to be
very useful, since it allows us to prove that a contactomorphism
between two contact projective structures which maps chains to chains
actually is a morphism of the contact projective structures.

\subsection{Generalized path geometries}\label{4.1}
As we have briefly mentioned in \ref{2.1}, path geometries can be
viewed as smooth families of curves on a manifold $M$ with exactly one
curve through each point in each direction. More formally, let $M$ be
a smooth manifold of dimension $m$, and let $N:=\Cal PTM$ be the
projectivized tangent bundle of $M$. This is a smooth fiber bundle
over $M$ with fiber the projective space $\Bbb RP^{m-1}$. In
particular, there is a canonical projection $\pi:N\to M$ and we have
the vertical subbundle $VN\subset TN$. Next, by definition a point 
in $N$ is a line $\ell\subset T_xM$, where $x=\pi(\ell)$. This
leads to a smooth subbundle $\Cal H\subset TN$, called the
\textit{tautological subbundle}. By definition, a tangent vector
$\xi\in T_\ell N$ lies in the subspace $\Cal H_\ell$ if and only if
$T_\ell\pi\cdot \xi\in\ell\subset T_xM$. By construction $\Cal
H\subset TN$ is a smooth subbundle of rank $m$, which contains the
vertical subbundle $V$ that has rank $m-1$. 

Now one defines a \textit{path geometry} on $M$ as a smooth line
subbundle $E\subset\Cal H\subset TN$, such that $\Cal H=E\oplus V$. As
a line bundle, $E$ is integrable and hence determines a foliation of
$N=\Cal PTM$ by 1--dimensional submanifolds. Since $E\cap V=\{0\}$, a
local integral manifold for $E$ always projects to a local
1--dimensional submanifold of $M$. Hence we really obtain a family of
paths in $M$. Moreover, taking the integral submanifold through
$\ell\in N$, the projection evidently passes through $x=\pi(\ell)$
with tangent space $\ell\subset T_xM$. Hence we see that in this
family there is exactly one path through each point in each direction.
It should be mentioned, that path geometry can be also interpreted as
describing the geometry of systems of second order ODE's, see
e.g.~\cite{C-tw} and \cite{C-two}.

In the spirit of filtered manifolds, one may go one step further, drop
the requirement that one deals with a projectivized tangent bundle and
just keep the configuration of subbundles with certain
(non--)integrability properties: Consider an arbitrary smooth manifold
$N$ of dimension $2m-1$ and two subbundles $E,V\subset TN$ of rank $1$
and $m-1$, such that $E\cap V=\{0\}$. Putting $\Cal H:=E\oplus V$, the
Lie bracket of vector fields induces a skew--symmetric bundle map
$\Cal H\x\Cal H\to TN/\Cal H$. Now the pair $(E,V)$ is said to define
a \textit{generalized path geometry} on $N$ if and only if this bundle
map vanishes on $V\x V$ and induces an isomorphism $E\otimes V\to
TN/\Cal H$. It is easy to see that this is always satisfied if $E$ and
$V$ come from a path geometry, see \cite{C-tw}. Further it turns out
that for $m\neq 3$, the subbundle $V$ in a generalized path geometry
is always involutive, and then the given geometry is locally
isomorphic to a path geometry on a local leaf space for the
corresponding foliation.

Any generalized path geometry on a manifold $N$ of dimension $2m-1$
induces a canonical normal parabolic geometry of type $(\hat G,\hat
P)$, where $\hat G=\tilde G=SL(m+1,\Bbb R)$ and $\hat P$ is the
subgroup of all elements which stabilize both the line spanned by the
first vector in the standard basis and the plane spanned by the first
two vectors in the standard basis of $\Bbb R^{m+1}$. On the level of
Lie algebras, we obtain a decomposition
$\hg=\hg_{-2}\oplus\hg_{-1}\oplus\hg_0\oplus\hg_1\oplus\hg_2$ such
that $\hp=\hg_0\oplus\hg_1\oplus\hg_2$ as well as decompositions
$\hg_{\pm 1}=\hg_{\pm 1}^E\oplus\hg_{\pm 1}^V$ according to the
following block decomposition with blocks of size $1$, $1$, and
$m-1$:
$$
\pmat{
\hat\g_0&\hat\g^E_1&\hat\g_2\\
\hat\g^E_{-1}&\hat\g_0&\hat\g^V_1\\
\hat\g_{-2}&\hat\g^V_{-1}&\hat\g_0}
$$
The subspaces $\hg_{-1}^E$ and $\hg_{-1}^V$ give rise to $\hat
P$--invariant subspaces in $\hg/\hp$ and the relation between the
parabolic geometry and the generalized path geometry is given by the
fact these two subspaces induce the subbundles $E$ and $V$ of $TN$, 
which define the generalized path geometry. Requiring the parabolic
geometry to be regular and to satisfy the normalization condition
discussed in \ref{3.2}, the parabolic geometry is uniquely determined
up to isomorphism. One obtains an equivalence of categories between
generalized path geometries and regular normal parabolic geometries in
this way.

\subsection{The path geometry of chains}\label{4.2}
Let $(M,H)$ be a contact manifold of dimension $2n+1$ endowed with a
contact projective structure, and let $(\G\to M,\om)$ be the
associated canonical Cartan geometry of type $(G,P)$ as in Theorem
\ref{2.2}. The chains of the contact projective structure can be
described as follows: Consider the one--dimensional subspace
$\fg_{-2}\subset\fg$ and the corresponding rank one subbundle
$\om^{-1}(\fg_{-2})\subset T\G$. This is involutive and since the
vertical subbundle corresponds to $\frak p\subset\fg$, local integral
submanifolds project to local one--dimensional immersed submanifolds
in $M$. Alternatively, the chains can be viewed as the projections of
the flow lines of the constant vector fields $\om^{-1}(X)$ with
$X\in\fg_{-2}$. This concept generalizes to all parabolic contact
structures. In that setting, it was shown in Section 4 of \cite{C-S-Z}
that chains are available through each point in $M$ tangent to each
line $\ell\in T_xM$ which is not contained in $H_x\subset T_xM$ and,
as an unparametrized curve, a chain is uniquely determined by its
tangent in one point.

This nicely fits into the picture of generalized path geometries. The
subset $\Cal P_0TM\subset\Cal PTM$ of lines not contained in the
contact distribution evidently is open, and it is a fiber bundle over
$M$ with fiber the complement of a hyperplane in $\Bbb RP^{2n}$. It is
also clear that the chains give rise to a generalized path geometry on
$\Cal P_0TM$. A description of this geometry in terms of $(\G\to
M,\om)$ can be found in Section 2.4 of \cite{CZ}. In our situation,
there is however a simple way to describe the parabolic geometry
corresponding to the path geometry of chains, at least in the case of
vanishing contact torsion. Namely, we know that the chains are
actually among the paths of the canonical subordinate projective
structure associated to the contact projective structure.

Applying the generalized Fefferman construction from \ref{3.1} to
$(\Cal G\to M,\om)$, we obtain a Cartan geometry $(\tcg\to
M,\tilde\om)$, which induces the subordinate projective structure on
$M$. To get the associated (generalized) path geometry, one applies
the correspondence space construction from \cite{C-tw}. By
construction, the subgroup $\hat P\subset \hat G=\tilde G$ is
contained in $\tilde P$. Hence one can form $N:=\tcg/\hat P$, which
can be identified with the total space of the fiber bundle
$\tcg\x_{\tilde P}(\tilde P/\hat P)$ over $M$. One immediately
verifies that $\tcg\x_{\tilde P}(\tilde P/\hat P)\cong\Cal PTM$. By
construction $(\tcg\to N,\tilde\om)$ is a Cartan geometry of type
$(\hat G,\hat P)$. In the case of vanishing contact torsion, $(\tcg\to
M,\tilde\om)$ is torsion free and normal. One easily verifies that
torsion freeness implies that the parabolic geometry $(\tcg\to
N,\tilde\om)$ is regular and by Proposition 2.4 of \cite{C-tw} it is
normal, too. Hence it is the canonical parabolic geometry associated
to the underlying path geometry, whose paths are the geodesics of the
connections in the projective class, see Section 4.7 of \cite{C-tw}.
Of course, the path geometry of chains can be recovered from this as
the restriction to the open subset $\Cal P_0TM\subset\Cal PTM=N$.
Having made these observations, the first part of the following result
is obvious, while the second essentially follows from the general
theory of correspondence spaces. 

\begin{prop*}
  Let $(M,H)$ be a contact manifold endowed with a contact projective
  structure.

  \noindent (1) There is a linear connection on $TM$ that has chains
  among its geodesics.

\noindent 
(2) If the given contact projective structure has vanishing contact
torsion, then the associated path geometry is torsion free if and only
if it is locally flat, which is equivalent to local flatness of the
initial contact projective structure. 
\end{prop*}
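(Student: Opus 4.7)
Part (1) is essentially a restatement of the preceding results. By Proposition \ref{P3.3} the subordinate projective structure on $M$ produced by the generalized Fefferman construction coincides with Fox' construction, and by Corollary \ref{C3.3} its paths are exactly the union of the contact geodesics and the chains. Taking any representative linear connection $\nabla$ on $TM$ in the projective class---a torsion free one, say---the chains are then among its unparametrized geodesics.

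For part (2), my plan is to chain together several equivalences so as to isolate a single technical assertion. By hypothesis the contact projective structure has vanishing contact torsion, so Theorem \ref{T3.2} shows that the Cartan geometry $(\tcg\to M,\tilde\om)$ from the generalized Fefferman construction is normal and torsion free, and is locally flat if and only if $(\G\to M,\om)$ is. Proposition 2.4 of \cite{C-tw} then ensures that the correspondence space $(\tcg\to N,\tilde\om)$ is itself regular and normal, hence is the canonical parabolic geometry associated to the path geometry of chains. Crucially, $\tilde\om$ is the same one-form on $\tcg$ whether we view it as a geometry over $M$ or over $N$, so its curvature vanishes on $\tcg$ in one description exactly when it does in the other. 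Combined with Theorem \ref{T3.2} this yields the equivalence between local flatness of the path geometry and local flatness of the initial contact projective structure. One direction of the remaining equivalence---locally flat implies torsion free---is trivial.

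What remains, and is the only genuinely new point, is the implication ``torsion free $\Rightarrow$ locally flat'' for the path geometry. My plan is to invoke the BGG machinery to reduce local flatness of the regular normal parabolic geometry $(\tcg\to N,\tilde\om)$ to vanishing of its harmonic curvature. Under the refined path-geometry grading one has $\tg_0 = \hg_0 \oplus \hg^V_{-1} \oplus \hg^V_1$, and since $\tilde\kappa$ already takes values in $\tilde\p = \hat\g_0 \oplus \hg^V_{-1} \oplus \hat\g_1 \oplus \hat\g_2$ by torsion freeness of the projective structure, torsion freeness of the path geometry (values in $\hat\p$) amounts to the vanishing of the $\hg^V_{-1}$-component of $\tilde\kappa$. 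By the general correspondence-space theory of \cite{C-tw}, the harmonic curvature of the correspondence space is faithfully encoded by the projective Weyl tensor of $(\tcg\to M,\tilde\om)$. The main obstacle I foresee is therefore to check that the $\hat G_0$-equivariant projection of the irreducible projective Weyl module onto its $\hg^V_{-1}$-summand has trivial kernel; equivalently, that no non-trivial $\hat G_0$-submodule of the Weyl module takes values solely in $\hg_0 \oplus \hg^V_1$. Once verified, torsion freeness of the path geometry forces the projective Weyl to vanish, and hence by Theorem \ref{T3.2} the initial contact projective structure is locally flat.
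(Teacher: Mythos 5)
Your argument for part (1) and the chain of equivalences at the start of part (2) match the paper's proof: local flatness of the initial structure is equivalent, via Theorem \ref{T3.2}, to local flatness of $(\tcg\to M,\tilde\om)$, hence of $(\tcg\to N,\tilde\om)$ since the curvature function on $\tcg$ does not depend on which base one projects to, and hence of the path geometry of chains --- for this last step the paper explicitly uses that $\Cal P_0TM$ is \emph{dense} and open in $N$, a point you should not gloss over, since the chain geometry lives only on the restriction to $\Cal P_0TM$. The implication ``locally flat $\Rightarrow$ torsion free'' is indeed trivial.

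The gap is in the converse, which you correctly isolate as the only substantive point. The paper disposes of it in one line by citing Theorem 4.7 of \cite{C-tw}: for path geometries induced by projective structures, torsion freeness implies local flatness. You instead propose to reprove this via the BGG machinery, reducing to the harmonic curvature and then to a representation-theoretic assertion about the position of the projective Weyl module inside $\ker(\square)$ for the pair $(\hg,\hp)$ --- namely that it contains no nonzero $\hat G_0$-submodule with values in $\hp$, so that vanishing of the $\hg^V_{-1}$-component forces the whole module to vanish. This is the right shape of argument (it is essentially how the cited theorem is proved), but as written you explicitly leave the decisive computation unverified: you ``foresee'' the obstacle rather than overcome it. Until you either carry out that check --- identify which harmonic component of a geometry of type $(\hat G,\hat P)$ survives for correspondence spaces of projective structures and verify that it is a torsion, i.e.\ that every nonzero element has a nontrivial $\hg^V_{-1}$-component --- or simply cite Theorem 4.7 of \cite{C-tw} as the paper does, the proof is incomplete at precisely the one step that is not formal bookkeeping.
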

\begin{proof}
  (2) In Theorem \ref{T3.2} we have observed that local flatness of
  $(\G\to M,\om)$ is equivalent to local flatness of $(\tcg\to
  M,\tilde\om)$. Since $(\tcg\to M,\tilde\om)$ and $(\tcg\to
  N,\tilde\om)$ share the same curvature function, it is equivalent to
  local flatness of the latter geometry, too. Since $\Cal P_0TM\subset
  N$ is a dense open subset, we get the equivalence to local flatness
  of the path geometry of chains. Finally, it has been proved in
  Theorem 4.7 of \cite{C-tw} that for path geometries induced by
  projective structures torsion freeness implies local flatness.
\end{proof}

\begin{remark*}\label{R4.2}
  We have pointed out part (1) of this proposition only because it is
  in sharp contrast with the case of other parabolic contact
  structures. In \cite{CZ} it is shown that for integrable Lagrangean
  contact structures as well as CR structures of hypersurface type,
  the chains can \textit{never} be obtained as geodesics of a linear
  connection. Also part (2) is significantly different for those
  structures. While local flatness of the initial structure is
  equivalent to torsion freeness of the path geometry of chains, these
  path geometries are always non--flat for integrable Lagrangean
  contact and CR structures.
\end{remark*}

\subsection{Chain preserving contactomorphisms}\label{4.3}
As we have mentioned in the Remark above, for the parabolic contact
structures studied in \cite{CZ} the path geometry of chains is always
non--flat. It is proved there, that the parabolic contact structure
can essentially be recovered from the curvature of the path geometry of
chains. This leads to a conceptual proof of the fact that
contactomorphisms which map chains to chains (as unparametrized
curves) are homomorphisms (or anti--homomorphisms in an appropriate
sense) of the underlying parabolic contact structure. 

For contact projective structures, the situation is different of
course, since for a locally flat contact projective structure, also
the path geometry of chains is locally flat. Still we can show that,
assuming vanishing contact torsion, contactomorphisms which map chains
to chains are morphisms of contact projective structures.
\begin{thm*}\label{T4.3}
  For $i=1,2$ let $(M_i,H_i)$ be a contact manifolds endowed with
  contact projective structures with vanishing contact torsion. Let
  $f:M_1\to M_2$ be a contact diffeomorphism which maps chains to
  chains. Then $f$ is an isomorphism of contact projective structures.
\end{thm*}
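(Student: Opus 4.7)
The plan is to exploit the fact that, under the vanishing contact torsion assumption, the path geometry of chains is literally the restriction of the path geometry of the subordinate projective structure to an open dense subset of $\Cal PTM$. By Theorem \ref{T3.2}, the generalized Fefferman construction applied to the canonical Cartan geometry $(\G_i\to M_i,\om_i)$ of the contact projective structure on $M_i$ yields a normal projective Cartan geometry $(\tcg_i\to M_i,\tilde\om_i)$; by Corollary \ref{C3.3} together with the discussion in \ref{4.2}, the chains are precisely those paths of the subordinate projective structure whose tangent line lies in the open subset $\Cal P_0 TM_i\subset\Cal PTM_i$ of lines transverse to $H_i$.

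Now lift $f:M_1\to M_2$ to its prolongation $F:=\Cal PTf:\Cal PTM_1\to\Cal PTM_2$. Since $f$ is a contactomorphism, $F$ restricts to a diffeomorphism $\Cal P_0 TM_1\to\Cal P_0 TM_2$. The hypothesis that $f$ sends chains to chains translates directly into the statement that, over this open subset, $F$ intertwines the distinguished line subbundles $E_1\subset T\Cal PTM_1$ and $E_2\subset T\Cal PTM_2$ encoding the associated generalized path geometries. Since $E_1$ and $F^*E_2$ are smooth rank one subbundles of $T\Cal PTM_1$ agreeing on an open dense subset---the complement is the projectivization of the contact subbundle, which has codimension one in each fiber of $\Cal PTM_1\to M_1$---they must coincide everywhere. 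The vertical subbundles are automatically respected by $F$, so $F$ is a morphism of generalized path geometries on all of $\Cal PTM_i$, which means that $f$ is a morphism of the full subordinate projective structures.

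To finish, recall from Proposition \ref{P3.1} that the paths of the subordinate projective structure whose tangent vector lies in $H$ are exactly the contact geodesics of the original contact projective structure. Since $f$ preserves both $H$ and the full family of subordinate projective paths, it must send contact geodesics to contact geodesics, and combined with the contactomorphism property this is precisely the assertion that $f$ is an isomorphism of contact projective structures. The main hurdle in this plan is verifying that $F$-preservation of the distinguished line bundle extends from $\Cal P_0 TM$ to all of $\Cal PTM$; once the density of $\Cal P_0 TM$ is observed, this is just a continuity argument, and the rest of the proof amounts to shuffling between three equivalent encodings---chains, paths of the subordinate structure, and integral curves of a line bundle on $\Cal PTM$---already set up in the previous sections.
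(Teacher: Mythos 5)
Your proof is correct, but the crucial extension step is carried out by a genuinely different mechanism than in the paper. Both arguments reduce the theorem to showing that $f$ is a morphism of the subordinate projective structures, using that over $\Cal P_0TM_i$ the path geometry of chains is the restriction of the path geometry of the subordinate projective structure, and both ultimately rest on the fact that $\Cal P_0T_xM$ is large in $\Cal PT_xM$. You pass from $\Cal P_0TM_1$ to all of $\Cal PTM_1$ downstairs, by a density argument on the underlying line subbundles: $E_1$ and $(TF)^{-1}(E_2)$ are continuous line subbundles of $T\Cal PTM_1$, the locus where two such subbundles coincide is closed, and it contains the dense open subset $\Cal P_0TM_1$. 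The paper instead works one level up: it lifts $f$ to a morphism $\Psi$ of the canonical parabolic geometries of type $(\hat G,\hat P)$ over $\Cal P_0TM_i$ and extends $\Psi$ to the full Cartan bundles over $\Cal PTM_i$ by a fiberwise open--closed argument, using $\hat P$--equivariance, the local reproduction of the principal $\tilde P$--action coming from $\Psi^*\tilde\om_2=\tilde\om_1$, and connectedness of $\Cal P_0T_xM$ as the complement of a hyperplane in $\Bbb RP^{2n}$. Your route is more elementary in that it never invokes the equivalence between path geometries and normal parabolic geometries; in particular it does not use normality of $\tilde\om$, so the hypothesis of vanishing contact torsion (which the paper needs in \ref{4.2} precisely to identify the restricted Fefferman geometry with the canonical normal geometry of the chain path geometry) seems to play no role in your argument, since Proposition \ref{P3.1} and Corollary \ref{C3.3} hold without it. What the paper's approach buys is consistency with the method of \cite{CZ}, which applies to parabolic contact structures for which the chain path geometry does not extend to a path geometry on all of $\Cal PTM$. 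Two details you should state explicitly to close the argument: that the coincidence locus of two continuous line subbundles is closed (so density really suffices), and that a projective structure is determined by its unparametrized geodesics, so that preservation of the paths indeed yields a morphism of the subordinate projective structures and hence, in contact directions, of the contact projective structures.
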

\begin{proof}
  Put $N_i:=\Cal PTM_i$ and let $(p_i:\tcg_i\to N_i,\tilde\om_i)$ be
  the (regular normal) parabolic geometries associated to the path
  geometries determined by the subordinate projective structures.
  Consider $\Cal P_0TM_i\subset N_i$ and the restricted parabolic
  geometries $(p_i^{-1}(\Cal P_0TM_i)\to\Cal P_0TM_i,\tilde\om_i)$,
  which describe the path geometries of chains. By assumption, the
  contactomorphism $f$ induces a morphism of these path geometries,
  so it lifts to a morphism $\Psi$ between the Cartan geometries.
  
  Our aim is to extend $\Psi$ to a morphism between the Cartan
  geometries $(\tcg_i\to N_i,\tilde\om_i)$. Choose a local smooth
  section $\si$ of the principal bundle $\tilde p_1:\tcg_1\to M_1$,
  which has values in $p_1^{-1}(\Cal P_0TM_1)$, and let $U\subset M_1$
  be its domain of definition. Then there is a unique $\tilde
  P$--equivariant map $\Psi_\si:\tilde p_1^{-1}(U)\to\tcg_2$ such that
  $\Psi_\si(\si(x))=\Psi(\si(x))$ for any $x\in U$. We claim that
  $\Psi_\si$ is an extension of $\Psi$ to $\tilde p_1^{-1}(U)$.  To
  prove this take, a point $x\in U$ and consider
$$ 
A_x := \{u\in p_1^{-1}(x): \Psi(u)=\Psi_\si(u)\}\subseteq\tilde p_1^{-1}(x). 
$$
By definition $\si(x)\in A_x$, so this set is non--empty. Further,
since both $\Psi$ and $\Psi_\si$ are equivariant for the principal
right action of $\hat P\subset\tilde P$, the set $A_x$ is $\hat
P$--invariant, and it is closed by definition.

For $A\in\tp$, the fundamental vector fields $\ze_A^i\in\frak
X(\tcg_i)$ are given by $\tilde\om_i^{-1}(A)$. Since
$\Psi^*\tilde\om_2=\tilde\om_1$, we conclude that
$T\Psi\o\ze_A^1=\ze_A^2\o\Psi$, so $\Psi$ also intertwines the flows
of these vector fields, whenever they are defined. Otherwise put, for
any $u\in p_1^{-1}(\Cal P_0TM_1)$ there is a neighbourhood $V$ of
$e\in\tilde P$ such that $\Psi(ug)=\Psi(u)g$ for all $g\in V$. Since
$\Psi_\si$ is $\tilde P$--equivariant by definition, this implies that
for any $u\in A_x$ a neighborhood of $u$ is contained in $A_x$, so
$A_x$ is open. Since we have noted above that $A_x$ is $\hat
P$--equivariant, we can prove that $A_x=p_1^{-1}(x)$ and hence our
claim by showing that the image of $A_x$ surjects onto $\Cal
P_0T_xM_1\subset\Cal PTM_1$. But the projection to $\Cal P_0T_xM_1$ is
a surjective submersion and hence an open mapping. Since both $A_x$
and its complement are open, also the image of $A_x$ in $\Cal
P_0T_xM_1$ is open and closed. Since $\Cal P_0T_xM_1$ is the
complement of a hyperplane in projective space and hence connected,
the proof of the claim is complete.

By construction, $\Psi_\si:\tilde p^{-1}(U)\to\tilde\G_2$ covers
$f|_U:U\to M_2$, so we can view it as a morphism between the Cartan
geometries $(\tilde p_1^{-1}(U),\tilde\om_1)$ and $(\tilde
p_2^{-1}(f(U)),\tilde\om_2)$. But this exactly means that $f|_U$ is a
morphism between the subordinate projective structures, so in
particular it locally preserves the contact geodesics. Hence locally
and thus globally $f$ is a morphism of contact projective structures. 
\end{proof}

\end{document}